\documentclass[11pt]{article}

\usepackage[T1]{fontenc}
\usepackage[utf8]{inputenc}
\usepackage[a4paper,margin=1in]{geometry}
\usepackage{amsmath,amssymb,amsfonts,amsthm,amsopn}
\usepackage{graphicx}
\usepackage{ifpdf}
\usepackage{xcolor}
\usepackage{natbib}
\usepackage{hyperref}
\usepackage{cleveref}
\usepackage{tikz}
\usetikzlibrary{arrows.meta,calc,decorations.pathreplacing}

\ifpdf
  \DeclareGraphicsExtensions{.pdf,.png,.jpg}
\else
  \DeclareGraphicsExtensions{.eps}
\fi

\numberwithin{equation}{section}
\newtheorem{theorem}{Theorem}[section]
\newtheorem{lemma}{Lemma}[section]

\title{Localized Pointwise A Posteriori Error Estimates for Nonconforming Finite Element Methods}
\author{Yongxing Guo\thanks{Email: \texttt{guoyongxing@zju.edu.cn}}
\qquad
Yuwen Li\thanks{Email: \texttt{liyuwen@zju.edu.cn}}\\[2pt]
\small School of Mathematical Sciences, Zhejiang University, 866 Yuhangtang Road,\\
\small Hangzhou, Zhejiang 310058, People's Republic of China}
\date{}

\ifpdf
\hypersetup{
  hidelinks,
  pdftitle={Localized Pointwise A Posteriori Error Estimates for Nonconforming Finite Element Methods},
  pdfauthor={Yongxing Guo and Yuwen Li}
}
\fi

\newcommand{\Th}{\mathcal{T}_h}
\newcommand{\Eh}{\mathcal{E}_h}
\newcommand{\Eho}{\mathcal{E}_h^\circ}
\newcommand{\Ehb}{\mathcal{E}_h^\partial}

\newcommand{\llbracket}{\mathopen{[\![}}
\newcommand{\rrbracket}{\mathclose{]\!]}}

\newcommand{\F}{\mathcal F}

\newcommand{\jump}[1]{\llbracket #1\rrbracket}

\newcommand{\dist}{\operatorname{dist}}
\newcommand{\supp}{\operatorname{supp}}
\newcommand{\diam}{\operatorname{diam}}

\newcommand{\dd}{\,\mathrm d}

\graphicspath{{figures/}}
\definecolor{farblue}{RGB}{94,155,203}
\definecolor{nearorange}{RGB}{230,153,73}

\definecolor{blueone}{RGB}{222,235,247}
\definecolor{bluetwo}{RGB}{198,219,239}
\definecolor{bluethree}{RGB}{174,202,226}
\definecolor{sourcepoint}{RGB}{190,45,45}

\begin{document}

\maketitle

\begin{abstract}
This paper establishes the first localized pointwise a posteriori error estimates for nonconforming finite element discretizations of the Poisson and biharmonic equations. For the Poisson problem, we derive localized estimates for the function-value and broken gradient errors of the Crouzeix--Raviart method. For the Morley discretization of the biharmonic equation, we derive a localized a posteriori estimate that controls the local Hessian error. This provides the first pointwise a posteriori error analysis for the biharmonic equation, for either conforming or nonconforming finite element methods. The key ingredient, absent from pointwise analysis of second order PDEs, is the design of two novel weight functions that facilitate sharp estimates of the \(L^1\) norms of derivatives of a regularized Green's function for the biharmonic operator.
\end{abstract}

\medskip
\noindent\textbf{Keywords.}
Crouzeix--Raviart method; Morley method; biharmonic equation; localized pointwise estimate; a posteriori error estimate; Green's function.

\medskip
\noindent\textbf{2020 Mathematics Subject Classification.}
Primary 65N15, 65N30; Secondary 65N50.

\section{Introduction}
A posteriori error estimation provides computable information about the accuracy of finite element approximations and forms the basis of adaptive mesh refinement. Maximum-norm a posteriori bounds are particularly useful when pointwise error control is required. However, corner or edge singularities on polygonal domains may render the global maximum-norm error unbounded. Localized pointwise a posteriori estimates circumvent this limitation by controlling the error in a target region where the solution is locally regular. In practice, localized pointwise a posteriori error estimates are of particular interest because they can control finite element error within a local region of engineering interest. They also justify the local reliability of a posteriori error estimates up to higher-order perturbations, whereas most energy-norm error estimators in the literature are known to be reliable only globally.

The theory of pointwise error estimates for conforming discretizations of second-order elliptic problems is well developed; see \citep{Ciarlet1978,BrennerScott2008,SchatzWahlbin1978,DemlowLeykekhmanSchatzWahlbin2012,DieningRolfesSalgado2024} for pointwise a priori error estimates. \citet{Nochetto95} first established \(L^\infty\) a posteriori error estimates for conforming piecewise linear finite element approximations of Poisson's equation. For the same problem, \citet{HoffmannSchatzWahlbinWittum2001} proposed an elementwise asymptotically exact a posteriori estimator for the pointwise gradient error under a priori assumptions. Later, \citet{Demlow2007} derived a localized $W^{1,\infty}$ pointwise a posteriori error estimate for Poisson's equation, in which the local error is controlled by local $W^{1,\infty}$ error indicators and a global pollution term measured in a weaker norm.
The local a posteriori analysis in \citep{Demlow2007} was subsequently extended to the parabolic equation \citep{DemlowMakridakis2010} and the Stokes equations \citep{DemlowLarsson2013}. Local energy- and $L^2$-norm error estimators, together with convergence results for the corresponding adaptive methods, can be found in \citep{LiaoNochetto2003,XuZhou2004,Demlow2010}.

Global maximum-norm a posteriori error estimates for nonconforming finite element methods, such as the Crouzeix--Raviart (CR) method, have been established in \citep{DariDuranPadra2000}. Maximum-norm a priori error estimates for the CR method were derived in \citep{GastaldiNochetto1987}. Energy-norm residual-based and superconvergent recovery-type error estimators for the CR method can be found in \citep{CarstensenBartelsJansche2002,CaiHeZhang2017,Li2018SINUM,BankLi2019,HuMaMa2021}, while convergence analysis of the adaptive CR method was developed in \citep{becker2010convergent}. To the best of our knowledge, localized \(L^\infty\) and $W^{1,\infty}$ a posteriori error estimates for the CR method have not yet been established. 
In this paper, the desired estimates are obtained by combining the Galerkin projection property of the CR interpolation operator with the $L^1$ bounds for truncated regularized Green's functions.

Compared with second-order elliptic problems, the theory of pointwise error estimates for biharmonic problems is much less developed. Nonconforming methods are widely used for fourth-order equations because implementing $C^1$ finite elements is expensive and difficult. Relevant maximum-norm-type a priori error estimates have been derived for the Morley method \citep{Rannacher1979,Wang1993}, mixed methods \citep{Rannacher1979}, conforming methods \citep{LiAnLi2007}, and the $C^0$ interior penalty method \citep{Leykekhman2021}. To the best of our knowledge, no global or localized pointwise a posteriori error analysis is available for the biharmonic equation. In contrast, energy-norm a posteriori error estimates for finite element discretizations of biharmonic equations, including the Morley method, are well established \citep{BeiraoNiiranenStenberg2007,HuShi2009,HuMa2016,Li2021JSCb}. 

To fill this gap, we establish the first pointwise a posteriori error estimate that controls the localized Hessian error of the Morley method. Due to insufficient elliptic regularity, a global $W^{2,\infty}$ a posteriori error estimate is impossible even on convex domains. The analysis of the biharmonic problem is more difficult than that of second-order elliptic equations because the biharmonic operator is of higher order and the Morley finite element is more strongly nonconforming. The key ingredient, absent from the localized pointwise analysis of second order PDEs, is the design of two novel weight functions $\Theta_T$ and $\Phi_T$ that facilitate sharp estimates of the \(L^1\) norms of derivatives of a regularized Green's function for the biharmonic equation.
As far as we know, the estimates derived here for the regularized biharmonic Green's functions and the two weight functions are new; see Sections \ref{sect:Green} and \ref{sect:Morley} for details.

\subsection{Main results}
Let $\Omega \subset\mathbb{R}^2$ be a polygonal domain, and let \((\cdot,\cdot)\) denote the \(L^2(\Omega)\) inner product. Let $D^kv$ denote the $k$th-order tensor comprising all $k$th-order derivatives of $v$; for example, $Dv=D^1v$ is the gradient and $D^2v$ is the Hessian. Given $f\in L^\infty(\Omega)$, we consider the weak form of Poisson's equation with homogeneous Dirichlet boundary conditions: find $u\in H_0^1(\Omega)$ such that
	\begin{equation*}
		a(u,v)=(D u, D v)=(f,v), \quad \forall v\in H_0^1(\Omega).
	\end{equation*}
We also consider the weak formulation of the biharmonic equation with clamped boundary conditions: find $U\in H_0^2(\Omega)$ such that
	\begin{equation*}
		b(U,v)=(D^2U, D^2v) =(f,v), \quad \forall v\in H_0^2(\Omega).
	\end{equation*}

Let \(\mathcal T_h\) be a conforming triangulation of \(\Omega\). Denote by \(\mathcal V_h\) and \(\mathcal E_h\) the sets of all mesh vertices and edges, respectively. For a set $U$, let $\mathcal{E}_h(U)$ denote the set of edges contained in $U$. Let \(\mathcal V_h^\circ\), \(\mathcal V_h^\partial\), \(\mathcal E_h^\circ\), and \(\mathcal E_h^\partial\) denote the subsets of interior vertices, boundary vertices, interior edges, and boundary edges, respectively. 
For each edge \(F \in \mathcal E_h\), let \(n=n_F\) and \(t=t_F\) denote the unit normal and unit tangent vectors along $F$, respectively. For an interior edge \(F=T^+\cap T^-\), \(\llbracket  \cdot \rrbracket_F\) denotes the jump across \(F\). When no confusion arises, we omit the subscript \(F\).
We assume that the mesh is shape-regular, namely, there exists a constant \(\gamma_{\rm sh}>0\), independent of \(h\), such that 
	\begin{equation*}
		\frac{h_T}{\rho_T}\le \gamma_{\rm sh}, 
		\qquad \forall T\in\mathcal T_h .
	\end{equation*}

Consider the Crouzeix--Raviart finite element space
	\begin{equation*}
		\begin{split}
			V_h:=\biggl\{&v_h\in L^2(\Omega):\ v_h|_T\in\mathbb{P}_1(T)\, \forall T\in\mathcal{T}_h,\\
			&\int_F\jump{v_h}\,\dd s=0\ \forall F\in\mathcal{E}_h^\circ,\ \int_Fv_h\,\dd s=0\ \forall F\in\Ehb\biggr\}
		\end{split}
	\end{equation*}
and the Morley finite element space
	\begin{equation*}
		\begin{split}
			W_h:=\biggl\{&v_h\in L^2(\Omega):\ v_h|_T\in\mathbb P_2(T)\ \forall T\in\Th,\ v_h\text{ is single-valued at every }a\in\mathcal V_h^\circ,\\
			&\int_F\jump{\partial_n v_h}\,\dd s=0\ \forall F\in\Eho, v_h(a)=0\ \forall a\in\mathcal V_h^\partial,\ \int_F\partial_n v_h\,\dd s=0\ \forall F\in\Ehb\biggr\}.
		\end{split}
	\end{equation*}
Here $\mathbb{P}_r(T)$ is the space of polynomials of degree at most $r$ on $T$, and $\partial_n$ denotes the directional derivative along $n=n_F$.

Let \(D_h^k\) denote the broken \(k\)th-order derivative tensor on \(\mathcal{T}_h\). The CR method for the Poisson equation seeks \(u_h\in V_h\) such that
\begin{equation}\label{eq:CR}
a_h(u_h,v_h):=(D_hu_h,D_hv_h)=(f,v_h),\qquad \forall v_h\in V_h.
\end{equation}
The Morley method seeks \(U_h\in W_h\) satisfying
	\begin{equation}\label{eq:Morley}
		b_h(U_h,v_h):=(D_h^2U_h,D_h^2v_h)=(f,v_h),\qquad \forall v_h\in W_h.
	\end{equation}

We establish localized pointwise a posteriori error estimates for \eqref{eq:CR} and \eqref{eq:Morley}. 
Let \(\omega\subset\Omega\) be an open target region with smooth boundary, and let \(d>0\) be a localization radius. Define \(\omega_d=\{x\in\Omega:\operatorname{dist}(x,\omega)<d\}\). We impose one of the following conditions: either \(\operatorname{dist}(\omega_d,\partial\Omega)>d\), or \(\omega_d\) meets \(\partial\Omega\) only along a flat boundary portion and remains separated from reentrant corners and edges on the scale \(d\). Let \(h_F=|F|\), \(h_T=\operatorname{diam}T\), and \(\rho_T\) denote the inradius of an element 
	\(T\in\mathcal{T}_h\).
For the local mesh sizes near \(\omega\), set
    \[
    \underline h_{\omega}:=\min\{h_T:T\cap \omega\neq\emptyset\},
    \qquad
    \overline h_{\omega}:=\max\{h_T:T\cap \omega\neq\emptyset\}.
    \]
We assume the local resolution condition $\overline h_{\omega}\le d$. The local broken maximum norm is
\[
\|v\|_{L^\infty(\omega;\mathcal T_h)}
:=\max_{T\in\mathcal T_h}\|v\|_{L^\infty(\omega\cap T)}.
\]

The local error indicator for the CR method is defined by
    \begin{subequations}
	\begin{equation*}
		\eta_0(T):= h_T^2\|f\|_{L^\infty(T)}+ \max_{F\subset\partial T}\|\jump{u_h}\|_{L^\infty(F)},
	\end{equation*}
	\begin{equation*}
		\eta_1(T):=h_T\|f\|_{L^\infty(T)}
		+h_T^{-1} \max_{F\subset\partial T}\|\jump{u_h}\|_{L^\infty(F)}.
	\end{equation*}
    \end{subequations}
For \(T\in\mathcal T_h\), the corresponding Morley estimator is defined by
\begin{align*}
\zeta(T):&=h_T^2\|f\|_{L^\infty(T)}+\max_{F\subset\partial T} h_F^{-1} \|\llbracket\partial_n U_h\rrbracket \|_{L^\infty(F)}+\max_{F\subset\partial T} h_F^{-2} \|\llbracket U_h\rrbracket \|_{L^\infty(F)}.
\end{align*}

We assume that the mesh is shape-regular, namely, there exists a constant \(\gamma_{\rm sh}>0\), independent of \(h\), such that $\max_{T\in\mathcal{T}_h}(h_T/\rho_T)\le \gamma_{\rm sh}$ for all $T\in\mathcal T_h$. 
Throughout this paper, \(A_1\lesssim A_2\) means that \(A_1\le C A_2\) for some generic constant \(C>0\) independent of the mesh size and the quantities under consideration. We write \(A_1\eqsim A_2\) when \(A_1\lesssim A_2\) and \(A_2\lesssim A_1\). Let $L_{\rho,d}=1+\log(d/\rho)$. 

For an integer $k\ge0$, $1\le p\le\infty$, and the conjugate exponent $q$, we use the negative norm
\begin{equation*}
\|w\|_{W^{-k,p}(G)}
:=\sup_{0\ne v\in C_0^\infty(G)}
\frac{|\langle w,v\rangle|}{\|v\|_{W^{k,q}(G)}},
\qquad \frac1p+\frac1q=1,
\end{equation*}
with the usual endpoint conventions $1/\infty=0$.

Our main theoretical results are stated in the next two theorems.
\begin{theorem}\label{thm:main_CR}
Suppose that \(u\in C^{2,\alpha}(\omega_{d})\) for some \(0<\alpha\le1\). 
For every integer \(k\ge0\) and every \(1\le p\le\infty\), we have
		\begin{equation*}
        \begin{aligned}
            \|u-u_h\|_{L^\infty(\omega;\mathcal T_h)}
			&\lesssim L_{\underline{h}_\omega,d}\max_{T\cap \omega_{d}\ne\emptyset}\eta_0(T)+d^{-k-2/p}\|u-u_h\|_{W^{-k,p}(\omega_{d})}\\
            &+\max_{T\cap \omega_{d}\ne\emptyset}h_T^{2+\alpha}|u|_{C^{2,\alpha}(\omega_{d})},\\
			\|D_{h}(u-u_h)\|_{L^\infty(\omega;\mathcal T_h)}
			&\lesssim L_{\underline{h}_\omega,d}\max_{T\cap \omega_{d}\ne\emptyset}\eta_1(T)
			+d^{-1-k-2/p}\|u-u_h\|_{W^{-k,p}(\omega_{d})}\\
            &+\max_{T\cap \omega_{d}\ne\emptyset}h_T^{1+\alpha}|  u|_{C^{2,\alpha}(\omega_{d})}.
			\end{aligned}
		\end{equation*}
	\end{theorem}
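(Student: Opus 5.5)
Fix a point $x_0\in\omega\cap T_0$ with $T_0\in\Th$; we bound $|(u-u_h)(x_0)|$, and analogously $|D_h(u-u_h)(x_0)|$ in a direction $\ell$. The approach follows \citet{Demlow2007}, adapted to the nonconforming setting.

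\textbf{Step 1: localization and splitting.} Choose a cutoff $\chi\in C_0^\infty(\omega_{d/2})$ (or supported in $\omega_d$ up to the flat boundary portion) with $\chi\equiv1$ on $\omega_{d/4}$ and $|D^j\chi|\lesssim d^{-j}$. Set $e=u-u_h$ and decompose
\[
e=(u-\Pi_h u)+(\Pi_h u-u_h),
\]
where $\Pi_h$ is the CR interpolant, defined by its edge averages. The term $u-\Pi_h u$ is bounded on $T_0$ by $h_{T}^{2}|u|_{W^{2,\infty}}$, respectively $h_T|u|_{W^{2,\infty}}$ for the gradient. To reach the stated $h_T^{2+\alpha}|u|_{C^{2,\alpha}}$ terms, I would also subtract the quadratic Taylor polynomial: the interpolation error of that polynomial is itself expressible through local quantities and absorbed into $\eta$. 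This absorption is the weakest point of the plan. If it fails, I would keep $u-\Pi_hu$ inside $e$ and use only that $\Pi_hu$ is a Galerkin projection in the sense $a_h(u-\Pi_hu,v_h)=0$.

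\textbf{Step 2: regularized Green's function representation.} Let $\delta$ be a smooth delta function supported in $T_0$, with $\|\delta\|_{L^1}\lesssim1$, $\|\delta\|_{L^\infty}\lesssim\rho_{T_0}^{-2}$, and reproducing linear polynomials on $T_0$ (for the gradient, use $\partial_\ell$ of such a function). Let $g$ solve $-\Delta g=\delta$ in $\Omega$, $g=0$ on $\partial\Omega$. Then
\[
(\chi e)(x_0)=(\chi e,\delta)=\sum_T (D(\chi e),Dg)_T-\sum_F\int_F \jump{\chi e}\,\partial_n g\,\dd s .
\]
Expand $D(\chi e)=\chi De+eD\chi$ and move $\chi$ onto $g$, writing $\chi Dg=D(\chi g)-gD\chi$. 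This gives $a_h(e,\chi g)$ plus commutator terms of the form $\int e\,D\chi\cdot Dg$ and $\int e\,g\,\Delta\chi$, supported in the annulus $\omega_{d/2}\setminus\omega_{d/4}$, away from $x_0$. There $g$ is smooth, and a duality argument bounds these terms by $d^{-k-2/p}\|e\|_{W^{-k,p}(\omega_d)}$ (with one extra $d^{-1}$ for the gradient). This step uses the standard interior estimates for the Green's function away from its pole, valid under either assumption on $\omega_d$.

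\textbf{Step 3: consistency and the residual.} Use the CR Galerkin property $a_h(e,v_h)=0$ for $v_h\in V_h$ with $v_h=\Pi_h(\chi g)$, which is supported near $\omega_d$. The remaining term $a_h(e,\chi g-\Pi_h(\chi g))$ is split elementwise. Integrating by parts on each $T$ gives the residual $f$, paired with $\chi g-\Pi_h(\chi g)$, plus edge terms. The edge terms involving $\partial_n u_h$ vanish because $\partial_n u_h$ is constant on each edge and the interpolation error has zero edge mean. The nonconformity jumps $\jump{u_h}$ pair with $\partial_n(\chi g)$. Together these give
\[
|(\chi e)(x_0)|\lesssim \max_{T\cap\omega_d\neq\emptyset}\eta_0(T)\Bigl(\sum_T h_T^{-2}\|\chi g-\Pi_h(\chi g)\|_{L^1(T)}+\|D^2(\chi g)\|_{L^1}\Bigr)
\]
plus the pollution term. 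The traces are handled by $h_F^{-1}$ weighted trace inequalities and $\|D(\cdot)\|_{L^1(F)}\lesssim h^{-1}\|D\cdot\|_{L^1(T)}+\|D^2\cdot\|_{L^1(T)}$.

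\textbf{Step 4, the main obstacle: the $L^1$ bound on $g$.} The key estimate is $\|D^2 g\|_{L^1(\omega_d)}\lesssim L_{\rho_{T_0},d}$. For the gradient case the corresponding bound is $\|D^3 g\|_{L^1}$-type with an $h^{-1}$ scaling, giving $\eta_1$. To prove it, decompose $\omega_d$ into dyadic annuli $A_j=\{2^{-j-1}d<|x-x_0|<2^{-j}d\}$ around $x_0$, down to radius $\approx\rho_{T_0}$. On each annulus use $|D^2g|\lesssim |x-x_0|^{-2}$, so each annulus contributes $O(1)$ and there are $O(\log(d/\rho))$ annuli. Near the pole, use the $W^{2,2}$ estimate $\|D^2g\|_{L^2}\lesssim\|\delta\|_{L^2}\lesssim\rho^{-1}$ on a ball of radius $\rho$, which contributes $O(1)$. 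The pointwise bounds near a flat boundary portion follow by reflection.

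With these ingredients, the $\underline h_\omega$ appearing in $L$ comes from $\rho_{T_0}\gtrsim\underline h_\omega$.
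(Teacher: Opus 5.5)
\textbf{The regularization step is missing, and your proposed substitute does not work.} In Step~2 you write $(\chi e)(x_0)=(\chi e,\delta)$. This is not an identity, because $e=u-u_h$ is not a polynomial on $T_0$. Splitting off $u-\Pi_hu$ as in Step~1 only produces $h_{T_0}^2|u|_{W^{2,\infty}(T_0)}$ and $h_{T_0}|u|_{W^{2,\infty}(T_0)}$. These are of the same order as the estimator, not the higher-order terms $h^{2+\alpha}|u|_{C^{2,\alpha}}$ and $h^{1+\alpha}|u|_{C^{2,\alpha}}$ in the theorem. The repair you suggest also fails. After removing the quadratic Taylor polynomial $P$ of $u$ at $x_0$, the leftover $(P-\Pi_hP)|_{T_0}$ is governed by the full Hessian $D^2u(x_0)$ and is of size $h_{T_0}^2|D^2u(x_0)|$. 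For example, it is nonzero for the harmonic quadratic $x_1x_2$, while the volume part of $\eta_0$ only sees $\operatorname{tr}D^2u=-f$. So this term is neither higher order nor a computable indicator, and nothing in your argument bounds it by $\eta_0$. Your fallback still has to pass from $e(x_0)$ to $(e,\delta)$, and with a $\delta$ that reproduces only $\mathbb P_1$ this costs $\rho^2|u|_{W^{2,\infty}(T_0)}$ for function values. The paper avoids $\Pi_hu$ altogether by choosing $\delta_{x_0,\rho}$ to reproduce $\mathbb P_2(T_0)$. Then $u_h|_{T_0}\in\mathbb P_1$ is reproduced exactly, and $|u(x_0)-(u,\delta_{x_0,\rho})|=|\int_{T_0}(P-u)\delta_{x_0,\rho}\,\dd x|\lesssim\rho^{2+\alpha}|u|_{C^{2,\alpha}(T_0)}$. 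For the gradient one uses $(\partial_\ell e,\delta_{x_0,\rho})_{T_0}=-(e,\partial_\ell\delta_{x_0,\rho})$, with remainder $\rho^{1+\alpha}|u|_{C^{2,\alpha}(T_0)}$. This is \eqref{eq:regularized_evaluation}.

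\textbf{Step~3 uses a false orthogonality, and Step~4 lacks the weighted bound needed for the gradient.} The CR method is not Galerkin orthogonal: in general $a_h(u-u_h,v_h)=\sum_{F}\int_F\partial_nu\,\jump{v_h}\,\dd s\neq0$. When you integrate $a_h(e,\chi g-\Pi_h(\chi g))$ by parts, you also drop the edge terms $\sum_F\int_F\partial_nu\,\jump{\chi g-\Pi_h(\chi g)}\,\dd s$. The two omissions cancel, so the identity $a_h(e,\chi g)=(f,\chi g-\Pi_h(\chi g))$ you arrive at is true. The valid argument, however, is the reverse orthogonality $a_h(w_h,v-I_hv)=0$ for $w_h\in V_h$ (your $\Pi_h$ is the paper's $I_h$). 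Together with $a(u,v)=(f,v)$ and \eqref{eq:CR}, it gives \eqref{eq:poisson-residual_id} for conforming $v$. In the jump terms you must likewise use $\int_F\jump{u_h}\,\dd s=0$ to subtract an edge constant from the normal derivative. Without it, the trace inequality you quote leaves $\sum_Th_T^{-1}\|D(\chi g)\|_{L^1(T)}$, which is of order $d/h$ on a quasi-uniform mesh rather than $L_{\rho,d}$. Finally, for the gradient a ``$\|D^3g\|_{L^1}$-type bound with an $h^{-1}$ scaling'' is not enough. The unweighted norm is of size $\rho^{-1}$, and $\rho^{-1}\max_T\eta_0(T)$ is not bounded by $\max_T\eta_1(T)$ on graded meshes, since $\eta_0(T)=h_T\eta_1(T)$ and $h_T/\rho$ can be as large as $d/\rho$. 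What is needed is the elementwise-weighted estimate $\sum_{T\cap\omega_d\ne\emptyset}h_T\|D^2\phi_1\|_{L^1(T)}\lesssim L_{\rho,d}$ of Lemma~\ref{lem:D2phi-gradient-weighted-L1-estimate}. It combines $|D^2g_1|\lesssim(|x-x_0|+\rho)^{-3}$ with the mesh-grading bound $h_T\lesssim\dist(x_0,T)+\rho$ implied by shape regularity. With it, both the volume term and the jump terms are bounded by $L_{\rho,d}\max_T\eta_1(T)$.
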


\begin{theorem}\label{thm:main_Morley}
Suppose that \(U\in C^{3,\alpha}(\omega_{d})\) for some \(0<\alpha\le1\). Then, for every integer \(k\ge0\) and \(1\le p\le\infty\), we have
		\begin{equation*}
			\begin{aligned}
                \|D_h^2(U-U_h)\|_{L^\infty(\omega;\mathcal T_h)}&\lesssim  L_{\underline h_{\omega},d} \max_{T\cap \omega_{d}\ne\emptyset} \zeta(T)+d^{-2-k-2/p}\|U-U_h\|_{W^{-k,p}(\omega_{d})}\\
                &+\max_{T\cap \omega_{d}\ne\emptyset} h_T^{1+\alpha}|U|_{C^{3,\alpha}(\omega_{d})}.
			\end{aligned}
		\end{equation*}
\end{theorem}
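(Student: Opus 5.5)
We prove Theorem~\ref{thm:main_Morley} by a duality argument with a regularized, localized biharmonic Green's function, in the spirit of \citep{Demlow2007} but adapted to the Morley element. Fix $T_0\in\Th$ with $T_0\cap\omega\ne\emptyset$, a point $x_0\in T_0\cap\omega$, and a unit second-order tensor direction. Let $\delta$ be a smooth approximate delta supported in $T_0$ with $\|D^j\delta\|_{L^1}\lesssim\rho_{T_0}^{-j}$ and $\int_{T_0}\delta\, q=q(x_0)$ for $q\in\mathbb P_2$. Then $D^2_h(U-U_h)(x_0)$ is reproduced up to a term $h_{T_0}^{1+\alpha}|U|_{C^{3,\alpha}}$, which comes from replacing $U$ by its local Taylor quadratic. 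Choose a cutoff $\chi\in C_0^\infty(\omega_{d/2})$ with $\chi\equiv1$ on $\omega_{d/4}$ and $|D^j\chi|\lesssim d^{-j}$. Set $e=U-U_h$ and let $G$ solve $\Delta^2 G=\partial^2\delta$ with clamped conditions (the regularized Green's function for the Hessian component). Then
\[
D^2_h e(x_0)\approx (D_h^2(\chi e),D^2G)+\text{commutator terms},
\]
where the commutator terms contain $D^j\chi$ times $D_h^{2-j}e$. These lower-order terms are supported in the annulus $\omega_{d/2}\setminus\omega_{d/4}$, away from $x_0$. There $G$ is smooth with the decay bounds $|D^mG(x)|\lesssim |x-x_0|^{-m}$, so they are bounded by a negative norm of $e$ on $\omega_d$. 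This yields the pollution term $d^{-2-k-2/p}\|e\|_{W^{-k,p}(\omega_d)}$ after integrating by parts $k$ times and applying Hölder. Handling the lower-order broken derivatives of $e$ through a local interior/inverse estimate, iterated on nested domains, is routine.

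The main term is $(D_h^2 e,D_h^2(\chi G))$. We use the Morley interpolant $I_h$ and Galerkin orthogonality, and the identity $(D_h^2 (U-I_hU),D_h^2 v_h)=0$ for $v_h\in W_h$; this is the Morley analogue of the projection property used for CR. We split
\[
b_h(e,\chi G)=b_h(e,\chi G-I_h(\chi G))+\bigl[(f,I_h(\chi G))-b_h(U_h,I_h(\chi G))\bigr]+\text{consistency}.
\]
Elementwise integration by parts of $b_h(U_h,\cdot)$ and of the consistency term yields edge terms involving $\jump{U_h}$ and $\jump{\partial_nU_h}$, tested against $\partial_{nn}$, $\partial_{nt}$, $\partial_n\Delta$ or $\Delta$-type derivatives of $\chi G$. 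Each term is bounded by $\max_T\zeta(T)$ times weighted $L^1$ norms of the form
\[
\sum_T h_T^{2}\|D^4(\chi G)\|_{L^1(T)}+\sum_F h_F\|D^3 (\chi G)\|_{L^1(F)}+\sum_F\|D^2(\chi G)\|_{L^1(F)},
\]
with trace inequalities converting the edge norms into element norms. Here the two weight functions $\Theta_T,\Phi_T$ enter: they encode $h_T$ and the distance to $x_0$, and are used to prove that these weighted sums are $\lesssim L_{\underline h_\omega,d}$.

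The main obstacle is this weighted $L^1$ estimate for $D^3G$ and $D^4G$. Near $T_0$ we use $H^s$ regularity of $G$ together with $\|D^j\delta\|$ bounds. Away from $T_0$ we apply dyadic annuli around $x_0$ with local interior estimates for the biharmonic operator, giving $|D^{2+m}G|\lesssim \rho^{-2-m}$ at distance $\rho$. Summing $h_T^m\rho^{-2-m}|T|$ over annuli up to radius $d$ gives the logarithm $L_{\underline h_\omega,d}$, where the mesh grading is controlled by $\overline h_\omega\le d$ and shape regularity. Near a flat boundary portion we use reflection-type local regularity in place of interior estimates. Finally, the term $b_h(e,\chi G-I_h(\chi G))$ is controlled by the local approximation of $I_h$ and the same bounds, which completes the estimate after taking the maximum over $x_0$ and $T_0$.
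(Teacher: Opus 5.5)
Your overall architecture matches the paper's: regularized delta, truncated biharmonic Green's function, the Morley projection property, weighted $L^1$ bounds, and a negative-norm pollution term. However, the central error representation has a genuine gap. In your split
\[
b_h(e,\chi G)=b_h\bigl(e,\chi G-I_h(\chi G)\bigr)+\bigl[(f,I_h(\chi G))-b_h(U_h,I_h(\chi G))\bigr]+\text{consistency},
\]
none of the three pieces does what you claim.
\begin{itemize}
\item The bracket is identically zero by \eqref{eq:Morley}, since $I_h(\chi G)\in W_h$.
\item The consistency term $b_h(U,I_h(\chi G))-(f,I_h(\chi G))$ integrates by parts into edge integrals of $D^2U$ and $\partial_n\Delta U$ against jumps of the \emph{discrete} function $I_h(\chi G)$. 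It produces no $\jump{U_h}$ or $\jump{\partial_nU_h}$ tested against derivatives of $\chi G$, and it is not computable.
\item Bounding $b_h(e,\chi G-I_h(\chi G))$ ``by the local approximation of $I_h$'' gives at best $L_{\rho,d}\,\|D_h^2e\|_{L^\infty(\omega_d;\Th)}$. That is the unknown error on a larger set times a factor that is not small, so the argument is circular and cannot be kicked back.
\end{itemize}
The missing observation is that these pieces collapse. Since $\chi G\in H^2_0(\Omega)$, you have $b(U,\chi G)=(f,\chi G)$. The projection property gives $b_h(U_h,\chi G)=b_h(U_h,I_h(\chi G))=(f,I_h(\chi G))$. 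Hence $b_h(e,\chi G)=(f,\chi G-I_h(\chi G))$ exactly; your first and third terms add up to this.

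The jump terms that $\zeta$ must absorb come instead from the step you hid in ``$\approx$''. Elementwise integration by parts turns $(e,\Delta^2(\chi G))$ into $b_h(e,\chi G)$ plus edge integrals of $\jump{\partial_nU_h}$ against $\partial_{nn}(\chi G)$ and of $\jump{U_h}$ against $\partial_n\Delta(\chi G)+\partial_t\partial_{nt}(\chi G)$. The tangential part is integrated by parts along each edge, and the vertex contributions vanish because $U_h$ is single-valued at interior vertices and zero at boundary vertices. This is the paper's decomposition $I_1+I_2+I_3$.

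Three further points need repair.

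\textbf{Cutoff placement.} Put the cutoff on the Green's function from the outset, as the paper does. Since $D^\gamma\delta=\chi\Delta^2G$, you get $(e,D^\gamma\delta)=(e,\Delta^2(\chi G))-(e,\mathcal C)$ with $\mathcal C=\Delta^2(\chi G)-\chi\Delta^2G\in C_0^\infty$ supported in the transition annulus. The pollution term is then a direct negative-norm pairing, with $\|\mathcal C\|_{W^{k,q}}\lesssim d^{-4-k+2/q}$. Your version, with the cutoff on $e$, puts $D_he$ and $D_h^2e$ into the commutator. Removing them needs further integrations by parts with extra jump terms. There is no inverse estimate for $e=U-U_h$, and an interior estimate for $e$ would itself have to be proved a posteriori. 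So that ``routine'' step is unjustified, and it is also unnecessary.

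\textbf{Weights.} Your weighted sums are mis-scaled relative to $\zeta$. You need $\sum_T h_T\|D^3(\chi G)\|_{L^1(T)}$, because Morley interpolation only gives $\|v-I_hv\|_{L^1(T)}\lesssim h_T^3\|D^3v\|_{L^1(T)}$, so $h_T^2\|D^4\|$ is the wrong quantity for the $f$-term. You also need $\sum_F h_F\|D^2(\chi G)\|_{L^1(F)}$ and $\sum_F h_F^2\|D^3(\chi G)\|_{L^1(F)}$. Your $\sum_F\|D^2(\chi G)\|_{L^1(F)}$ and $\sum_F h_F\|D^3(\chi G)\|_{L^1(F)}$ grow like $h^{-1}L_{\rho,d}$ and $\rho^{-1}$ on quasi-uniform meshes, so they are not bounded by the logarithm.

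\textbf{Green's function bounds.} You pose $G$ on $\Omega$ and derive $|D^{2+m}G(x)|\lesssim|x-x_0|^{-2-m}$ from interior estimates on dyadic annuli. That requires annular energy decay, which you do not have: the global energy bound only gives $\|D^2G\|_{L^2(\Omega)}\lesssim\rho^{-1}$. The paper avoids this by posing the problem on the disk $B=B(x_0,d/2)$, where the kernel bounds of Lemma~\ref{lem:biharmonicGreen_regularized} are available.
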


A posteriori error estimates in Theorems \ref{thm:main_CR} and \ref{thm:main_Morley} are local because
the errors $\|u-u_h\|_{W^{-k,p}(\omega_{d})}$ and $\|U-U_h\|_{W^{-k,p}(\omega_{d})}$ in negative norms, together with the regularization terms involving H\"older seminorms, are of higher order. We also present a posteriori lower error bounds in Sections \ref{sect:CR} and \ref{sect:Morley}.
All results established in this paper extend naturally to three dimensions, with only minor modifications to the arguments.

The rest of the paper is organized as follows. Section~\ref{sect:Green} introduces the necessary estimates for regularized Green's functions. Section~\ref{sect:CR} establishes localized a posteriori error estimates for the CR method. Section~\ref{sect:Morley} develops localized pointwise a posteriori error estimates for the Morley method. Numerical experiments are presented in Section~\ref{sect:NE}. Section \ref{sect:conclusion} contains concluding remarks.

\section{Regularized Green's functions}\label{sect:Green}
To represent pointwise values, we need to analyze regularized Green's functions. Given a point $x_0\in\overline{\omega}$, choose \(T_0\in\mathcal{T}_h\) such that
\(x_0\in\overline T_0\) and $T_0\cap\omega\ne\emptyset$, and let
\(h_{T_0}\lesssim\rho\le \kappa h_{T_0}\), where \(\kappa>0\) is sufficiently small. Taking $\kappa\le1$, the local resolution condition gives
\begin{equation*}
\rho\le \kappa h_{T_0}\le h_{T_0}\le\overline h_\omega\le d.
\end{equation*}

There exists a regularized Dirac function \(\delta_{x_0,\rho}\in C^\infty_0(T_0)\), supported in a disk of radius \(\rho\), such that
\begin{subequations}
\begin{align}
\int_{T_0}\delta_{x_0,\rho}P\,\dd x=P(x_0),\qquad \forall P\in\mathbb P_s(T_0),\label{eq:delta_moments}\\
\|D^j\delta_{x_0,\rho}\|_{L^p(T_0)}\lesssim \rho^{-j-2(1-1/p)},\qquad j\ge0,\quad 1\le p\le\infty,\label{eq:delta_Lp}
\end{align}
\end{subequations}
see \citep{GiraultNochettoScott2005,Demlow2007} for a detailed construction.
In the CR case, we take \(s=2\), whereas in the Morley case, we take \(s=3\).

We consider regularized Green's functions on the disk
\(B:=B(x_0,d/2)\subset\omega_d\). Thus, $B$ is centered at $x_0$ and
$\diam(B)=d$. The support of $\delta_{x_0,\rho}$ is contained in $B$, but it is not necessarily centered at $x_0$.
Figure \ref{fig:localization-geometry} illustrates these concepts in localized error estimates. 

\begin{figure}[htbp]
\centering
\begin{tikzpicture}[
    scale=0.8,
    transform shape,
    x=1cm,y=1cm,
    every node/.style={font=\small},
    >={Latex[length=2mm]}
]
% The curved nonconvex physical domain Omega
\path[
    fill=gray!4,
    draw=black!75,
    line width=0.9pt
]
    (0.65,1.10)
    .. controls (0.05,2.25) and (0.15,5.60) .. (1.30,6.85)
    .. controls (2.80,8.05) and (6.70,7.75) .. (9.15,6.95)
    .. controls (10.40,6.55) and (10.65,5.45) .. (9.45,5.05)
    .. controls (8.55,4.75) and (7.95,5.55) .. (7.30,5.00)
    .. controls (6.75,4.50) and (8.10,3.85) .. (9.30,3.40)
    .. controls (10.70,2.85) and (10.25,1.15) .. (8.65,0.70)
    .. controls (6.45,0.05) and (2.20,0.10) .. (0.65,1.10)
    -- cycle;
\node[font=\Large] at (8.55,6.2) {$\Omega$};

% Centers of omega and the distinguished point
\coordinate (xomega) at (4.35,4.00);
\coordinate (x0) at (5.65,3.65);

% The d-neighborhood omega_d
% The new value of d is 2.35-1.65=0.70
\path[
    fill=blue!9,
    draw=blue!65!black,
    line width=0.9pt
]
    (xomega) circle[radius=2.35];
\node[,blue!65!black] at (5.75,5.45) {$\omega_d$};

% The target region omega
\path[
    fill=orange!24,
    draw=orange!80!black,
    line width=0.9pt
]
    (xomega) circle[radius=1.65];
\node[font=\Large,orange!80!black] at (4.35,4.0) {$\omega$};

% Distance between the boundaries of omega and omega_d
\draw[
    blue!65!black,
    <->,
    line width=0.8pt
]
    ($(xomega)+(205:1.65)$) --
    ($(xomega)+(205:2.35)$)
    node[midway,below left,xshift=9pt,
        yshift=1pt] {$d$};

% The local Green's function domain B
\path[
    fill=green!18,
    fill opacity=0.72,
    draw=green!45!black,
    line width=1pt
]
    (x0) circle[radius=0.825];
\node[green!35!black] at (5.4,4.03) {$B$};

% The source element T_0
\path[
    fill=gray!35,
    fill opacity=0.65,
    draw=black!70,
    line width=0.9pt
]
    ($(x0)+(-0.425,-0.275)$) --
    ($(x0)+( 0.325,-0.225)$) --
    ($(x0)+( 0.165, 0.625)$) -- cycle;
\node at (6.09,3.72) {$T_0$};

% Redraw the boundary of B
\draw[green!45!black,line width=1pt]
    (x0) circle[radius=0.825];
% Support of the regularized Dirac function
\path[
    fill=red!48,
    draw=red!75!black,
    line width=0.8pt
]
    ($(x0)+(0.0,-0.0)$) circle[radius=0.14];

\draw[->,red!75!black,line width=0.8pt]
    ($(x0)+(1.58,-0.45)$) -- ($(x0)+(0.18,-0.10)$);

\node[font=\large,anchor=west,red!75!black]
    at ($(x0)+(1.58,-0.45)$) {$\supp \delta_{x_0,\rho}$};

% The distinguished point x_0
\fill ($(x0)+(0.03,-0.01)$) circle[radius=1.8pt];
\node[anchor=north east]
    at ($(x0)+(-0.1,0.25)$) {$x_0$};
\end{tikzpicture}

\caption{Schematic diagram of localized pointwise estimates.}
\label{fig:localization-geometry}
\end{figure}

For the Poisson problem, let \(m\in\{0,1\}\) and let \(\gamma\) be a multi-index with \(|\gamma|=m\). Define \(g_m\in H^1_0(B)\) by
\begin{equation}\label{eq:local-poisson-green-strong}
		\begin{split}
		-\Delta g_m&=(-1)^mD^{\gamma}\delta_{x_0,\rho}\quad\text{in }B,\\
		g_m&=0  \qquad\qquad\qquad\:\:\: \text{on }\partial B.
		\end{split}
\end{equation}
For the biharmonic problem, we set \(|\gamma|=2\) and define \(\mathfrak{g}\in H^2_0(B)\) by
\begin{equation}\label{eq:local-biharmonic-green}
		\begin{split}
	\Delta^2\mathfrak{g}&=D^{\gamma}\delta_{x_0,\rho}\quad\text{in }B,\\
			\mathfrak{g}&=\partial_n \mathfrak{g}=0\quad\text{on }\partial B.
		\end{split}
	\end{equation}
The Green's function estimates for the Poisson equation are taken from \citep{krasovskii1969properties}, while those for the biharmonic equation are taken from \citep{dall2004estimates}.
\begin{lemma}\label{lem:standard-green-kernel-bound}
	Let \(G(x,y)\) be the Dirichlet Green's function on \(B\), namely
\begin{equation*}
\begin{aligned}
-\Delta_x G(x,y)&=\delta_y &&\text{in } B,\\
G(x,y)&=0 &&\text{on }\partial B,
\end{aligned}
\end{equation*}
where $\delta_y$ is the Dirac delta at $y$.
Then for all \(x,y\in B\) with \(x\neq y\), we have
    \begin{align*}
        |G(x,y)| &\lesssim1+\left|\log\dfrac{|x-y|}{d}\right|,\\
		|D_x^\beta G(x,y)|&\lesssim |x-y|^{-|\beta|},\quad |\beta|\ge1.
    \end{align*}
\end{lemma}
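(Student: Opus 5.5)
Since $B=B(x_0,d/2)$ is a disk, I would not invoke abstract Green's function theory. Instead I would use the explicit Poisson kernel. After translating $x_0$ to the origin, set $R=d/2$. The Dirichlet Green's function of the disk is
\[
G(x,y)=\frac{1}{2\pi}\Bigl(\log\frac{1}{|x-y|}-\log\frac{R}{|y|\,|x-y^*|}\Bigr),\qquad y^*=\frac{R^2y}{|y|^2},
\]
with the usual convention $G(x,0)=\frac{1}{2\pi}\log(R/|x|)$. The proof reduces to estimating the two logarithmic pieces. A preliminary scaling step makes everything clean. Writing $x=R\hat x$ and $y=R\hat y$ gives $G_B(x,y)=G_{B_1}(\hat x,\hat y)$ and $D_x^\beta G_B=R^{-|\beta|}D_{\hat x}^\beta G_{B_1}$. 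Since $|x-y|/d=|\hat x-\hat y|/2$, it suffices to prove the claims on the unit disk with $d$ replaced by $2$.

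\emph{Function bound.} On the unit disk, first use the identity $|y|\,|x-y^*|=\bigl|\,|y|x-y/|y|\,\bigr|$. Second, note that $|y|\,|x-y^*|\ge|x-y|$ for $x,y\in B_1$; this is equivalent to $G\ge0$, or follows directly from $|y|^2|x-y^*|^2-|x-y|^2=(1-|x|^2)(1-|y|^2)\ge0$. Third, observe that $|y|\,|x-y^*|\le |y||x|+1\le2$. Together these give
\[
0\le G(x,y)\le\frac1{2\pi}\Bigl(\log\frac{2}{|x-y|}\Bigr)\lesssim 1+\Bigl|\log\frac{|x-y|}{2}\Bigr|,
\]
which is the first estimate after scaling back.

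\emph{Derivative bounds.} For $|\beta|\ge1$, the singular term satisfies $|D_x^\beta\log|x-y||\lesssim|x-y|^{-|\beta|}$ by direct differentiation, since it is homogeneous of degree $-|\beta|$. For the reflected term $\log\bigl|\,|y|x-y/|y|\,\bigr|$, the chain rule gives
\[
\Bigl|D_x^\beta \log\bigl|\,|y|x-y/|y|\,\bigr|\Bigr|\lesssim |y|^{|\beta|}\bigl|\,|y|x-y/|y|\,\bigr|^{-|\beta|}=|x-y^*|^{-|\beta|}.
\]
This is bounded by $|x-y|^{-|\beta|}$ because the second step above yields $|y|\,|x-y^*|\ge |x-y|$ with $|y|\le1$, hence $|x-y^*|\ge|x-y|$. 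The case $y=0$ is trivial, since then the reflected term is constant in $x$. Scaling back gives $|D_x^\beta G|\lesssim R^{-|\beta|}|\hat x-\hat y|^{-|\beta|}=|x-y|^{-|\beta|}$.

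The main point needing care is this uniform control of the reflected term near $\partial B$, where $y^*$ approaches $x$. The inequality $|x-y^*|\ge|x-y|$ is exactly what makes the constants independent of how close $x$ and $y$ are to the boundary. Everything else is elementary, and one may alternatively simply cite \citep{krasovskii1969properties} as the paper indicates.
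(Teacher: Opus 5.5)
Your proof is correct, and it takes a different route from the paper. The paper does not prove this lemma; it imports the bounds from Krasovskii's general Green's function estimates for elliptic boundary value problems \citep{krasovskii1969properties}. You instead exploit that $B$ is a disk and use the Kelvin-reflection formula. On the unit disk, the single identity $|y|^2|x-y^*|^2-|x-y|^2=(1-|x|^2)(1-|y|^2)$ then gives three things: positivity of $G$, the upper bound $G\le\frac{1}{2\pi}\log(2/|x-y|)$, and the comparison $|x-y^*|\ge|x-y|$. That comparison controls every $x$-derivative of the reflected term uniformly up to $\partial B$. The chain-rule factor $|y|^{|\beta|}$ and the degenerate case $y=0$ are also handled correctly. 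What your route buys is a self-contained, elementary argument. In it, the independence of the constants from $d$ and $x_0$ is transparent through the two-dimensional scale invariance $G_B(x,y)=G_{B_1}(x/R,y/R)$. The paper leaves this point implicit, although estimates quoted for a fixed domain need exactly such a scaling step to give constants uniform over the family of disks $B(x_0,d/2)$. What the citation buys is generality: it covers smooth domains other than balls and higher-order operators. That is why the paper can handle the clamped biharmonic kernel in Lemma~\ref{lem:biharmonicGreen_regularized} the same way. Your explicit approach would need Boggio's formula on the disk for that case, with considerably more bookkeeping for the mixed $x$--$y$ derivatives.
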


\begin{lemma}\label{lem:biharmonicGreen_regularized}
	Let  \(\mathcal G(x,y)\) be the Green's function of the clamped biharmonic problem on \(B\), namely,
\[
	\begin{aligned}
		\Delta_x^2\mathcal G(x,y)&=\delta_y \qquad \text{in } B,\\
		\mathcal G(x,y)&=\partial_n\mathcal G(x,y)=0 \qquad \text{on }\partial B.
	\end{aligned}
\]
Then for all \(x,y\in B\) with \(x\neq y\), we have
	\begin{equation*}
		\left|D_x^\beta D_y^\gamma\mathcal G(x,y)\right|
		\lesssim
		\begin{cases}
			d^{2-|\beta|-|\gamma|}, &  |\beta|+|\gamma|\le 1,\\[1mm]
			1+\left|\log\dfrac{|x-y|}{d}\right|, & |\beta|+|\gamma|=2,\\[2mm]
			|x-y|^{2-|\beta|-|\gamma|}, & |\beta|+|\gamma|\ge 3.
		\end{cases}
	\end{equation*}
\end{lemma}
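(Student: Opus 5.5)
The plan is to reduce the estimate to the explicit clamped Green's function of the disk together with a scaling argument. By translation and dilation \(x\mapsto (x-x_0)/(d/2)\), it suffices to prove the bounds on the unit disk \(B_1\) with \(d=1\). Indeed, if \(\mathcal G_1\) is the clamped Green's function on \(B_1\), then
\[
\mathcal G(x,y)=(d/2)^2\,\mathcal G_1\bigl((x-x_0)/(d/2),(y-x_0)/(d/2)\bigr),
\]
because \(\Delta^2\) scales like \(d^{-4}\) and \(\delta_y\) like \(d^{-2}\). Each derivative then contributes a factor \(d^{-1}\), which gives \(d^{2-|\beta|-|\gamma|}\) in the first case. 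In the logarithmic case the argument becomes \(|x-y|/d\). In the third case we get
\[
d^{2-|\beta|-|\gamma|}\,\bigl(|x-y|/d\bigr)^{2-|\beta|-|\gamma|}=|x-y|^{2-|\beta|-|\gamma|}.
\]
So the lemma reduces to the unit-disk statement.

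On \(B_1\), I would use Boggio's explicit formula
\[
\mathcal G_1(x,y)=\frac{1}{8\pi}|x-y|^2\int_1^{A(x,y)/|x-y|}\frac{v^2-1}{v}\,dv,\qquad A(x,y)=\sqrt{|x|^2|y|^2-2x\cdot y+1}=\bigl|\,|x|y-x/|x|\,\bigr|.
\]
Evaluating the integral gives
\[
\mathcal G_1=\frac{1}{16\pi}\Bigl(A^2-|x-y|^2-|x-y|^2\log\frac{A^2}{|x-y|^2}\Bigr).
\]
This is the fundamental solution \(\tfrac1{8\pi}|x-y|^2\log|x-y|\) plus the correction
\[
R(x,y)=\frac1{16\pi}\bigl(A^2-|x-y|^2-|x-y|^2\log A^2\bigr).
\]

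The key fact is \(A(x,y)\ge \max\{|x-y|,\,1-|x|,\,1-|y|\}\) on \(B_1\times B_1\). In particular \(A>0\) except when \(x=y\in\partial B_1\).

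For points with \(A\gtrsim 1\), meaning both points are away from the boundary or separated, \(R\) is smooth and bounded with all derivatives. The stated bounds then follow from direct differentiation of \(|x-y|^2\log|x-y|\). Derivatives of total order \(\le1\) are bounded. Order 2 gives \(1+|\log|x-y||\). Order \(k\ge3\) gives \(|x-y|^{2-k}\).

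The main obstacle is the near-boundary regime where \(A\) is small, so that both \(x\) and \(y\) lie close to the same boundary point. There \(\log A^2\) and the total expression must be handled jointly, not separately. I would write
\[
\mathcal G_1=\frac{|x-y|^2}{16\pi}\,\Psi(s),\qquad s=\frac{A^2}{|x-y|^2}\ge1,\qquad \Psi(s)=s-1-\log s .
\]
Then I would use the identity \(A^2-|x-y|^2=(1-|x|^2)(1-|y|^2)\). Each derivative of \(A^2\) costs at most \(A\) (up to constants), since \(A^2\) is a quadratic polynomial with \(|D A^2|\lesssim A\) and \(|D^2A^2|\lesssim1\).

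I would split into the subcases \(|x-y|\ge A/2\) and \(|x-y|<A/2\). In the first, \(s\) is bounded, so \(\Psi\) is smooth in a bounded range. In the second, \(A\eqsim 1-|x|\eqsim 1-|y|\), and I would rewrite in the variable \(1/s\), using that \(\Psi(s)\) grows only like \(s\). In both subcases, I would apply the chain rule term by term. Each derivative falling on \(|x-y|^2\) costs \(|x-y|^{-1}\), and each derivative falling on \(A\) costs \(A^{-1}\le |x-y|^{-1}\). This gives the powers \(|x-y|^{2-k}\) for \(k\ge3\), and for \(k=2\) the factor \(\log s\le 1+|\log |x-y||\). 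For \(k\le1\), the boundedness of \(\mathcal G_1\) and its first derivatives follows from \(|x-y|^2\Psi(s)\lesssim A^2\lesssim1\) together with \(|D\Psi(s)|\lesssim |x-y|^{-2}A\cdot|x-y|\cdot(s-1)/s\lesssim1\).

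Finally, mixed derivatives \(D_x^\beta D_y^\gamma\) are treated identically, because the formula is symmetric in \(x\) and \(y\) and \(A^2\) is polynomial in both variables. Alternatively, one can simply cite the kernel estimates of Dall'Acqua–Sweers \citep{dall2004estimates}, which are proven by exactly this analysis, and then rescale.
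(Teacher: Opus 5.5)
Your argument is correct, but it takes a different route from the paper. The paper gives no derivation: it imports the kernel bounds from Dall'Acqua--Sweers, and the uniformity in $d$ comes tacitly from scaling. You instead prove the bounds from Boggio's explicit formula on the unit disk together with the dilation $\mathcal G(x,y)=(d/2)^2\mathcal G_1(\cdot,\cdot)$. Your route is self-contained, makes the $d$-independence of the constants explicit, and treats the mixed derivatives $D_x^\beta D_y^\gamma$ directly through the symmetric polynomial $A^2$. It relies on $B$ being a disk, which is all the paper needs. The cited general theory covers arbitrary smooth domains and also gives boundary-weighted bounds, which are irrelevant for this lemma. Your fallback of citing Dall'Acqua--Sweers and rescaling is exactly the paper's approach.

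Two remarks on your details. First, the claim $|D\Psi(s)|\lesssim 1$ in your $k\le 1$ step is false. For $x,y$ near the center with $|x-y|=\epsilon$, you have $s\sim\epsilon^{-2}$ and $|D\Psi(s)|\sim\epsilon^{-3}$. What is bounded is the full derivative, thanks to a cancellation:
\[
16\pi\, D\mathcal G_1=(1-s^{-1})\,DA^2-\log s\; D|x-y|^2 ,
\]
whose size is $\lesssim A+|x-y|\log s\lesssim 1$. For the same reason, term-by-term differentiation of $|x-y|^2\Psi(s)$ fails when $s\gg1$: the term $|x-y|^2\Psi'(s)D^k s$ is of size $A^2|x-y|^{-k}$. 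So your switch to the variable $1/s$ in that regime is essential, not cosmetic.

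Second, for these unweighted bounds neither the case split nor the ``joint'' treatment is needed. You have $|D_xA^2|=2|y|A$ and $|D_yA^2|=2|x|A$, and all higher derivatives of $A^2$ are bounded. Faà di Bruno then gives $|D^j\log A^2|\lesssim A^{-j}\le |x-y|^{-j}$ for $j\ge1$. In addition, $|x-y|\le A\le 2$ gives $|\log A|\le \log 2+|\log|x-y||$. Applying Leibniz' rule separately to $\frac1{8\pi}|x-y|^2\log|x-y|$ and to $R$ therefore yields all three regimes at once.
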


We next derive pointwise upper bounds for the regularized Green's functions.
We give the proof only for the more difficult biharmonic problem; the proof for the Poisson problem is analogous.
\begin{lemma}
	\label{lem:pointwise-green-estimates-displacement}
For every $x\in B$, we have
    \begin{equation*}
        |D^j \mathfrak{g}(x)|\lesssim
        \begin{cases}
            \displaystyle 1+\left|\log\frac{|x-x_0|+\rho}{d}\right|,
            & j=0,\\ \bigl(|x-x_0|+\rho\bigr)^{-j},
            & j\ge 1.
        \end{cases}
    \end{equation*}
\end{lemma}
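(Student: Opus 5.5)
We represent $\mathfrak g$ through the clamped Green's function $\mathcal G$ of Lemma \ref{lem:biharmonicGreen_regularized}. Since $\delta_{x_0,\rho}\in C_0^\infty(B)$ and $\mathfrak g\in H^2_0(B)$ solves \eqref{eq:local-biharmonic-green}, integration by parts in $y$ moves the derivatives onto the kernel:
\[
D^j\mathfrak g(x)=\int_B D_x^\beta \mathcal G(x,y)\,D^\gamma\delta_{x_0,\rho}(y)\dd y
=\int_B D_x^\beta D_y^\gamma\mathcal G(x,y)\,\delta_{x_0,\rho}(y)\dd y ,
\]
with $|\beta|=j$ and $|\gamma|=2$, up to a sign $(-1)^{|\gamma|}=1$. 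Here $\beta$ runs over the multi-indices whose derivatives form $D^j$. The kernel has total order $|\beta|+|\gamma|=j+2$. By Lemma \ref{lem:biharmonicGreen_regularized}, for $j=0$ it is bounded by $1+|\log(|x-y|/d)|$, and for $j\ge1$ by $|x-y|^{-j}$.

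\textbf{Far region.} Suppose $|x-x_0|\ge 2\rho$. For $y\in\supp\delta_{x_0,\rho}$ we have $|y-x_0|\lesssim\rho$; if the support disk is not centered at $x_0$, we enlarge the constant $2$ accordingly. Then $|x-y|\eqsim|x-x_0|\eqsim|x-x_0|+\rho$. Since $\|\delta_{x_0,\rho}\|_{L^1}\lesssim1$ by \eqref{eq:delta_Lp} with $p=1$, the kernel bound immediately gives the claimed estimate in both cases $j=0$ and $j\ge1$.

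\textbf{Near region.} Suppose $|x-x_0|<2\rho$, so that $|x-x_0|+\rho\eqsim\rho$. We use $\|\delta_{x_0,\rho}\|_{L^\infty}\lesssim\rho^{-2}$, and the support lies in a disk $B(x,C\rho)$ around $x$.

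For $j=0$ and $j=1$ the kernel singularity is integrable in two dimensions. With $r=|x-y|$,
\[
\rho^{-2}\int_{B(x,C\rho)}\Bigl(1+\Bigl|\log\frac{r}{d}\Bigr|\Bigr)\dd y\lesssim 1+\log\frac d\rho,
\qquad
\rho^{-2}\int_{B(x,C\rho)} r^{-1}\dd y\lesssim\rho^{-1}.
\]
These are the required bounds, using $\rho\le d$.

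\textbf{Main obstacle: $j\ge2$ in the near region.} Here $|x-y|^{-j}$ is not locally integrable, so the kernel bound alone does not suffice. The fix is to shift derivatives onto the mollifier. Translation invariance of $\mathcal G$ is false on $B$, so we cannot simply pass $D_x$ to $D_y$. Instead we split the kernel as $\mathcal G=\Gamma+R$, where $\Gamma(x-y)=\tfrac{1}{8\pi}|x-y|^2\log|x-y|$ is the fundamental solution and $R$ is the regular part.

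For the $\Gamma$ part, $D_x^\beta D_y^\gamma\Gamma(x-y)=(-1)^{|\beta|}D_y^{\beta+\gamma}\Gamma(x-y)$. Write $\beta=\beta_1+\beta_2$ with $|\beta_1|=1$, and move $D^{\beta_2}$ onto $\delta_{x_0,\rho}$. This leaves a kernel of order $3$, bounded by $|x-y|^{-1}$, integrated against $D^{\beta_2}\delta_{x_0,\rho}$. By \eqref{eq:delta_Lp} with $p=\infty$, $\|D^{\beta_2}\delta_{x_0,\rho}\|_{L^\infty}\lesssim\rho^{-2-(j-1)}$. Therefore this contribution is bounded by
\[
\rho^{-1-j}\int_{B(x,C\rho)}|x-y|^{-1}\dd y\lesssim\rho^{-j}.
\]

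For the regular part $R$, we would invoke interior smoothness: for $x$ near $x_0$, which is far from $\partial B$ on the scale $d$, derivatives of $R$ are bounded by powers of $d^{-1}\le\rho^{-1}$. If $x_0$ were close to $\partial B$ this would fail, but $B$ is centered at $x_0$, so this case does not arise.

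Alternatively, one can avoid the splitting altogether. Interior elliptic regularity for $\Delta^2\mathfrak g=D^\gamma\delta_{x_0,\rho}$ on $B(x_0,4\rho)$ bounds $D^j\mathfrak g$ there by scaled norms of the data plus $\rho^{-j}\sup|\mathfrak g-P|$, where $P$ is a suitable Taylor polynomial. The $j=1$ bound on the annulus $2\rho\le|x-x_0|\le4\rho$ controls these terms. We would use whichever of the two routes is cleaner, and expect this step to be the only delicate part.
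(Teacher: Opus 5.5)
Your argument is correct, and it departs from the paper at exactly the delicate step.

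For the far region, and for $j=0,1$ near $x_0$, you do what the paper does. The paper rescales by $\rho$ to $\widehat B$, which is equivalent to your direct integration of the integrable kernel.

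For $j\ge2$ near $x_0$, the paper rescales and then, citing ``symmetry of the Green's function and integration by parts'', moves $j-1$ derivatives from $\widehat x$ onto $\widehat\delta_{x_0,\rho}$. This leaves a third-order kernel bounded by $|\widehat x-\widehat y|^{-1}$. As you observe, symmetry $\mathcal G(x,y)=\mathcal G(y,x)$ does not turn $D_x$ into $-D_y$. That identity is translation invariance, which the clamped Green's function on $B$ lacks, so the paper's displayed equality tacitly uses a property that does not hold.

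Your splitting $\mathcal G=\Gamma+R$ is precisely what repairs this. The derivative transfer is exact for the convolution with $\Gamma$, giving $\rho^{-1-j}\int_{B(x,C\rho)}|x-y|^{-1}\,\dd y\lesssim\rho^{-j}$. The regular part contributes at most $d^{-j}\le\rho^{-j}$.

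The cost is one extra routine estimate, which you should write out:
\begin{itemize}
\item By scaling, $R(x,y)=d^2R_1\bigl(\tfrac{x-x_0}{d},\tfrac{y-x_0}{d}\bigr)-\tfrac{\log d}{8\pi}|x-y|^2$, where $R_1$ is the regular part for $B(0,1/2)$.
\item The $\log d$ term is a quadratic polynomial, so it disappears under derivatives of total order $j+2\ge4$. Joint smoothness of $R_1$ on compact interior sets then gives $|D_x^\beta D_y^\gamma R|\lesssim d^{-j}$ for $x,y\in B(x_0,d/4)$.
\item The condition $\rho\le\kappa h_{T_0}\le\kappa d$, with $\kappa$ small, guarantees that both $x$ (with $|x-x_0|<2\rho$) and $\supp\delta_{x_0,\rho}$ lie in $B(x_0,d/4)$.
\end{itemize}

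Your interior-regularity alternative also works. Apply the scaled interior estimate to $\mathfrak g-\mathfrak g(x_0)$ on $B(x_0,4\rho)$. Take the oscillation bound $\sup_{B(x_0,4\rho)}|\mathfrak g-\mathfrak g(x_0)|\lesssim1$ from the $j=1$ bound on the whole ball, not only on the annulus, since there is no maximum principle to pass from the annulus inward.

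In short, the paper's route is shorter but glosses over the lack of translation invariance. Yours is slightly longer but actually justifies the bound for $j\ge2$ in the near region.
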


\begin{proof}
By the definition \eqref{eq:local-biharmonic-green}, integration by parts with respect to \(y\), and differentiation with respect to \(x\), we obtain
	\[
	D^j\mathfrak{g}(x)=\int_BD_x^jD_y^\gamma\mathcal G(x,y)\delta_{x_0,\rho}(y)\,\dd y,\qquad j\ge 1.
	\]

For the outer region \(\{x\in B: |x-x_0|\ge 2\rho\}\), we have \(|x-y|\eqsim |x-x_0|\) for \(y\in\operatorname{supp}\delta_{x_0,\rho}\), and consequently \(|x-y|\eqsim |x-x_0|+\rho\). The pointwise estimates in Lemma \ref{lem:biharmonicGreen_regularized} and \eqref{eq:delta_Lp} therefore yield
	\[
    |D^j \mathfrak{g}(x)|\le\int_{T_0}|D_x^jD_y^\gamma\mathcal G(x,y)|\,|\delta_{x_0,\rho}(y)|\,\dd y\lesssim
    \left\{\begin{aligned}
        & 1+\left|\log\frac{|x-x_0|+\rho}{d}\right|,\quad j=0,\\
        &\bigl(|x-x_0|+\rho\bigr)^{-j},\quad j \ge 1.
    \end{aligned}\right.
	\]
	
For the inner region \(\{x\in B: |x-x_0|<2\rho\}\), we introduce the scaled variable $\widehat x=(x-x_0)/\rho$ and transformation $\widehat{f}(\widehat x)=f(x_0+\rho\widehat x)$.
Then \(\mathfrak{\widehat g} \) satisfies 
\[
\Delta_{\widehat x}^2\mathfrak{\widehat g} =\rho^2 D_{\widehat x}^{\gamma}\widehat\delta_{x_0,\rho}\quad\text{in }\widehat{B}=B(0,d/\rho).
\]

Let $\widehat{S}=\operatorname{supp}\widehat\delta_{x_0,\rho}$, and let $\mathcal{G}_{\widehat B}$ be the biharmonic Green's function on $\widehat{B}$.
By \eqref{eq:delta_Lp},
\begin{equation*}
\|D_{\widehat x}^l \widehat\delta_{x_0,\rho}\|_{L^\infty}
\lesssim \rho^{l} \|D_x^l \delta_{x_0,\rho}\|_{L^\infty}
\lesssim \rho^{-2}.
\end{equation*}
Combining this bound with the symmetry of the Green's function and integrating by parts yields
\begin{align*}
|D^j_{\widehat x}\mathfrak{\widehat g}(\widehat x)|
&=\left|D^j_{\widehat x}\int_{\widehat S}\mathcal G_{\widehat B}(\widehat x,\widehat y) \, \rho^2 D^\gamma_{\widehat y} \widehat\delta_{x_0,\rho}(\widehat y)\,\dd \widehat y\right|
\\
&=\rho^2 \left|\int_{\widehat S} D_{\widehat y} D^{\gamma}_{\widehat y}\mathcal G_{\widehat B}(\widehat y,\widehat x) \, D^{j-1}_{\widehat y} \widehat\delta_{x_0,\rho}(\widehat y)\,\dd \widehat y\right|\\
&\lesssim \rho^2 \|D_{\widehat y}^{j-1} \widehat\delta_{x_0,\rho}\|_{L^\infty} \int_{\widehat S}|D_{\widehat y} D^\gamma_{\widehat y} \mathcal G_{\widehat B}(\widehat x,\widehat y)| \,\dd \widehat y 
\lesssim \int_{\widehat S} |\widehat x-\widehat y|^{-1} \,\dd \widehat y 
\lesssim 1
\end{align*}
for $j \ge 1$.
Similarly, for $j=0$ we obtain
\begin{align*}
|\mathfrak{\widehat g}(\widehat x)|
\lesssim \|\rho^2  \widehat\delta_{x_0,\rho}\|_{L^\infty} \int_{\widehat S}|D^2_{\widehat x} \mathcal G_{\widehat B}(\widehat x,\widehat y)| \,\dd \widehat y
\lesssim1+\log\frac d\rho.
\end{align*}
Rescaling and using \(|x-x_0|+\rho\eqsim\rho\) in the inner region, we obtain 
\begin{align*}
	|\mathfrak{g}(x)|&\lesssim1+\log\frac d\rho\eqsim 1+\left|\log\frac{|x-x_0|+\rho}{d}\right|,\\
    |D^j\mathfrak{g}(x)|&\lesssim\rho^{-j}\eqsim \bigl(|x-x_0|+\rho\bigr)^{-j},\quad j\ge 1.    
\end{align*}
The proof is complete. 
\end{proof}

\begin{lemma}\label{lem:pointwise-green-estimates}
For every $x\in B$, we have
\begin{equation*}
    \begin{aligned}
        |D^j g_1(x)| &\lesssim (|x-x_0|+\rho)^{-1-j},\quad j\ge0,\\
        |D^j g_0(x)| &\lesssim  
        \left\{\begin{aligned}
        &1+\left|\log\frac{|x-x_0|+\rho}{d}\right|,  \qquad j=0,\\
        &(|x-x_0|+\rho)^{-j},\qquad \qquad \, j\ge1.
        \end{aligned}\right.
    \end{aligned}
	\end{equation*}
\end{lemma}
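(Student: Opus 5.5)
We follow the proof of Lemma~\ref{lem:pointwise-green-estimates-displacement}, now using the Dirichlet Green's function $G$ of Lemma~\ref{lem:standard-green-kernel-bound} in place of $\mathcal G$. For $m\in\{0,1\}$, the definition \eqref{eq:local-poisson-green-strong} together with $(-1)^m$ integrations by parts in $y$ gives the representation
\[
D^j g_m(x)=\int_B D_x^j D_y^{\gamma} G(x,y)\,\delta_{x_0,\rho}(y)\,\dd y .
\]
No boundary terms arise because $\delta_{x_0,\rho}\in C_0^\infty(T_0)$. To use Lemma~\ref{lem:standard-green-kernel-bound}, which only bounds $x$-derivatives, we need the mixed derivatives $D_x^jD_y^\gamma G$ with $|\gamma|=1$ to satisfy $|D_x^jD_y^\gamma G(x,y)|\lesssim |x-y|^{-j-1}$.

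For the disk this is available. The symmetry $G(x,y)=G(y,x)$ transfers the one-variable bound to $y$-derivatives. The mixed bound follows from the explicit formula
\[
G(x,y)=\tfrac1{2\pi}\Bigl(\log\tfrac{1}{|x-y|}-\log\tfrac{R}{|y|\,|x-y^*|}\Bigr),
\]
where $y^*$ is the Kelvin reflection of $y$ and $R=d/2$. The corrector part is harmonic, and its derivatives obey the same bounds since $|x-y^*|\gtrsim|x-y|$. Alternatively, we may cite the standard estimate $|D_x^\beta D_y^\gamma G|\lesssim|x-y|^{-|\beta|-|\gamma|}$ for $|\beta|+|\gamma|\ge1$ from \citep{krasovskii1969properties}. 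We take this as the kernel input, together with the logarithmic bound for $|\beta|+|\gamma|=0$.

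\textbf{Outer region} $|x-x_0|\ge2\rho$. For $y\in\supp\delta_{x_0,\rho}$ we have $|x-y|\eqsim|x-x_0|+\rho$, because the support has radius $\rho$. Since $\|\delta_{x_0,\rho}\|_{L^1}\lesssim1$ by \eqref{eq:delta_Lp}, the kernel bounds give $|D^jg_1(x)|\lesssim(|x-x_0|+\rho)^{-1-j}$ for all $j\ge0$. For $g_0$ they give $(|x-x_0|+\rho)^{-j}$ when $j\ge1$, and the logarithmic bound when $j=0$. In the logarithmic case we also use that $\log(|x-y|/d)$ and $\log((|x-x_0|+\rho)/d)$ differ by $O(1)$.

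\textbf{Inner region} $|x-x_0|<2\rho$. This is the main obstacle, since the kernel singularity now meets the support of $\delta_{x_0,\rho}$. The remedy is to move derivatives onto the smooth $\delta_{x_0,\rho}$ until the remaining kernel is integrable. For $g_0$ with $j\ge1$, write $D_x^j\int G\delta=\int D_x^{j}G\,\delta$. We then use translation-type integration by parts, or equivalently differentiate the Newtonian part of the explicit formula as a convolution, to pass $j-1$ derivatives onto $\delta_{x_0,\rho}$ while keeping one derivative on the kernel. This gives
\[
|D^jg_0(x)|\lesssim\|D^{j-1}\delta_{x_0,\rho}\|_{L^\infty}\int_{B(x_0,\rho)}|x-y|^{-1}\,\dd y\lesssim\rho^{-(j-1)-2}\rho=\rho^{-j}.
\]
In the paper's style this is cleanest after the scaling $\widehat x=(x-x_0)/\rho$. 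The harmonic corrector is smooth near $x_0$ on scale $d\ge\rho$, so its contribution is even smaller. For $j=0$, the logarithmic kernel bound integrated against $\|\delta_{x_0,\rho}\|_{L^\infty}\lesssim\rho^{-2}$ over a disk of radius $\rho$ gives $1+\log(d/\rho)$. The function $g_1$ is handled the same way: since the source is $-D^\gamma\delta_{x_0,\rho}$, we first shift $\gamma$ onto $\delta_{x_0,\rho}$, then shift $j-1$ further derivatives when $j\ge1$, leaving one kernel derivative. This yields $\rho^{-j-1}$. When $j=0$ we leave one derivative on $G$ and none extra on $\delta_{x_0,\rho}$, which gives $\rho^{-2}\cdot\rho=\rho^{-1}$.

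Since $|x-x_0|+\rho\eqsim\rho$ in the inner region, these inner bounds match the claimed estimates, and this completes the proof.
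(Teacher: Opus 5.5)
Your proposal is correct and follows the route the paper intends: the paper proves only the biharmonic analogue (Lemma~\ref{lem:pointwise-green-estimates-displacement}) and calls the Poisson case analogous, using a Green's-function representation, kernel bounds on the outer region, and transfer of all but one derivative onto $\delta_{x_0,\rho}$ on the inner region. Your splitting of $G$ into the Newtonian kernel plus a smooth harmonic corrector justifies that inner-region transfer more cleanly than the paper's appeal to symmetry, and you rightly note that the mixed bound $|D_x^jD_y^\gamma G(x,y)|\lesssim|x-y|^{-j-1}$ is needed even though Lemma~\ref{lem:standard-green-kernel-bound} does not state it. One minor slip, which the paper shares: $\supp\delta_{x_0,\rho}$ lies in $B(x_0,C\rho)$ rather than $B(x_0,\rho)$, so the outer/inner threshold $2\rho$ should be $2C\rho$.
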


\subsection{Truncated Regularized Green's Function}
\label{subsec:localization}
We introduce a cut-off function \(\xi\in C_0^\infty(B)\) with $\xi|_{B^c}\equiv0$ satisfying
    \begin{subequations}
    \begin{align}
    &\xi\equiv 1 \quad\text{on }\operatorname{supp}\delta_{x_0,\rho},\qquad\xi=0 \quad\text{on } \partial B, \label{eq:cutoff-one}\\
	&\|D^j\xi\|_{L^\infty(B)} \lesssim  d^{-j},\qquad\qquad\qquad\qquad\qquad\qquad j\ge 0 ,
    \label{eq:cutoff-bounds}\\
    &\operatorname{supp}(D^j\xi) \subset
	\left\{ x\in B: c_1d\le |x-x_0|\le c_2d \right\}, \,\,\,\, \quad j\geq 1,
    \end{align}
    \end{subequations}
where $c_1, c_2\in(0,1)$ are absolute constants. 

We now present pointwise bounds for the truncated Green's functions  
\begin{equation*}
\phi_m:=\xi g_m, \qquad\psi:=\xi \mathfrak{g}
\end{equation*}
in the next three lemmas. 
\begin{lemma}
	\label{lem:D2phi0}
    For the truncated Green's function \(\phi_0\), we have \begin{equation*}\|D^2\phi_0\|_{L^1(B)}\lesssim 1+\log\frac d\rho.
	\end{equation*}
\end{lemma}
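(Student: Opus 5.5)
We expand by the product rule, $D^2\phi_0=\xi D^2g_0+2D\xi\otimes Dg_0+g_0D^2\xi$, and bound each term in $L^1(B)$ separately using the pointwise estimates of Lemma \ref{lem:pointwise-green-estimates} together with the cutoff properties \eqref{eq:cutoff-bounds}.

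\emph{Cutoff terms.} The terms involving derivatives of $\xi$ are supported in the annulus $A:=\{c_1d\le|x-x_0|\le c_2d\}$. Since $\rho\le d$, on $A$ we have $|x-x_0|+\rho\eqsim d$. Lemma \ref{lem:pointwise-green-estimates} then gives $|Dg_0|\lesssim d^{-1}$ and $|g_0|\lesssim 1$ there. Because $|A|\lesssim d^2$, we obtain
\[
\|D\xi\otimes Dg_0\|_{L^1(A)}+\|g_0D^2\xi\|_{L^1(A)}\lesssim d^2\bigl(d^{-1}\cdot d^{-1}+d^{-2}\bigr)\lesssim 1.
\]

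\emph{Main term.} It remains to bound $\|\xi D^2g_0\|_{L^1(B)}\le\|D^2g_0\|_{L^1(B)}$. The case $j=2$ of Lemma \ref{lem:pointwise-green-estimates} gives $|D^2g_0(x)|\lesssim(|x-x_0|+\rho)^{-2}$. Integrating in polar coordinates centered at $x_0$ over $B=B(x_0,d/2)$ yields
\[
\int_B(|x-x_0|+\rho)^{-2}\dd x\lesssim\int_0^{d/2}\frac{r\,\dd r}{(r+\rho)^2}\lesssim\int_0^{\rho}\rho^{-2}r\,\dd r+\int_\rho^{d/2}r^{-1}\dd r\lesssim 1+\log\frac d\rho.
\]
Adding the three contributions gives the claim.

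The only delicate point is the main term, and specifically the validity of the pointwise bound $|D^2g_0(x)|\lesssim(|x-x_0|+\rho)^{-2}$ up to the boundary $\partial B$. This is exactly what Lemma \ref{lem:pointwise-green-estimates} supplies; since $B$ is a disk, Lemma \ref{lem:standard-green-kernel-bound} holds uniformly on it. Moreover, the cutoff $\xi$ confines the main term to $\supp\xi\Subset B$, so even an interior version of the bound would suffice. No additional obstacle is expected, because the logarithm arises solely from the $r^{-1}$ integrand between the scales $\rho$ and $d$.
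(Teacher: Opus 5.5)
Your proof is correct and follows the paper's argument: the same product-rule splitting $D^2\phi_0=\xi D^2g_0+2D\xi\otimes Dg_0+g_0D^2\xi$, the same $O(1)$ bounds for the cutoff terms on the annulus where $|x-x_0|+\rho\eqsim d$, and the same pointwise bound $|D^2g_0(x)|\lesssim(|x-x_0|+\rho)^{-2}$ for the main term. The only cosmetic difference is that you integrate this bound in polar coordinates, whereas the paper sums over the inner disk and $O(1+\log(d/\rho))$ dyadic annuli, each contributing $O(1)$.
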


\begin{proof}
By Lemma~\ref{lem:pointwise-green-estimates}, we have the pointwise bound
\[|D^2g_0(x)| \lesssim \bigl(|x-x_0|+\rho\bigr)^{-2}.\]
We decompose \(B\) into the inner region \(A_{-1}=\overline {B(x_0,\rho)}\) and the dyadic annuli
\begin{equation}\label{dyadic-annuli}
	    A_i = \{x\in B:2^i\rho<|x-x_0|\le 2^{i+1}\rho\},
	\qquad i=0,\ldots,M,
	\end{equation}
	where \(2^M\rho\eqsim d\). Thus, \(M\lesssim\log(d/\rho)\).

In the inner region \(A_{-1}\), we have \(|x-x_0|\leq \rho\) and thus 
	\[
	\|D^2g_0\|_{L^1(B(x_0,\rho))}\lesssim \rho^{-2}|B(x_0,\rho)|
	\lesssim  1.
	\]
On each annulus \(A_i\), we have \(|x-x_0|\eqsim 2^i\rho\), and hence
\[
\|D^2g_0\|_{L^1(A_i)} \lesssim 	(2^i\rho)^{-2}|A_i|\eqsim1.
\]
The number of annuli is bounded by \(O(1+\log(d/\rho))\). Hence
\begin{equation}\label{eq:D2g-L1-estimate}
		\|D^2g_0\|_{L^1(B)} \lesssim1+\log\frac d\rho.
	\end{equation}
It remains to pass from \(g_0\) to \(\phi_0=\xi g_0\). The product rule gives
	\[
	D^2\phi_0	=\xi D^2g_0+2 D \xi\otimes D  g_0+ g_0 D^2\xi .
	\]
The first term is bounded by \eqref{eq:D2g-L1-estimate}. The remaining terms are supported in the cut-off region, where \(|x-x_0|\eqsim d\). Using Lemma \ref{lem:pointwise-green-estimates}, \eqref{eq:cutoff-bounds}, and the fact that this region has measure \(O(d^2)\), we obtain
\[
	\| D \xi\otimes D  g_0\|_{L^1(B)}\lesssim d^{-1}d^{-1}d^2 \lesssim 1,
	\qquad
	\|g_0D^2\xi\|_{L^1(B)} \lesssim d^{-2}d^2\lesssim1 .
	\]
The proof is complete.
\end{proof}

\begin{lemma}
	\label{lem:D2phi-gradient-weighted-L1-estimate}
For the truncated Green's function \(\phi_1\), we have the weighted estimate
	\begin{equation*}
		\sum_{T\cap \omega_d\ne\emptyset}
		h_T\|D^2\phi_1\|_{L^1(T)} \lesssim 1+\log\frac d\rho.
	\end{equation*}
\end{lemma}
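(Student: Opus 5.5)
The plan is to mirror the proof of Lemma~\ref{lem:D2phi0}. The one new ingredient is a local mesh-size comparison, which is needed to offset the stronger singularity \(|D^2g_1|\lesssim(|x-x_0|+\rho)^{-3}\) from Lemma~\ref{lem:pointwise-green-estimates}. Without the weight, \(\|D^2g_1\|_{L^1}\) would behave like \(\rho^{-1}\). The factor \(h_T\) has to convert this into a log.

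First, since \(\operatorname{supp}\phi_1\subset B\), only elements meeting \(B\) contribute. On each such \(T\), the product rule gives
\[
D^2\phi_1=\xi D^2g_1+2D\xi\otimes Dg_1+g_1D^2\xi .
\]
The two cut-off terms live on \(\{c_1d\le|x-x_0|\le c_2d\}\). There \(|Dg_1|\lesssim d^{-2}\) and \(|g_1|\lesssim d^{-1}\), so these terms have \(L^1\) norm \(\lesssim d^{-1}\). Multiplying by \(h_T\le d\) and summing over elements gives \(\sum h_T\|\cdot\|_{L^1(T)}\le \max h_T\cdot d^{-1}\lesssim1\). Here I use \(h_T\lesssim d\) for elements meeting \(B\), which should follow from \(\overline h_\omega\le d\) together with the geometry, or simply from the bound \(T\subset\omega_d\)-scale.

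The main term is \(\sum_T h_T\|D^2g_1\|_{L^1(T\cap B)}\). I will use the dyadic decomposition \eqref{dyadic-annuli}. The key step is a local comparison \(h_T\lesssim |x-x_0|+\rho\) for \(x\in T\). Near \(x_0\), i.e. for elements within a few layers of \(T_0\), shape regularity gives \(h_T\eqsim h_{T_0}\eqsim\rho\). Farther away, the natural tool would be a bounded mesh-grading assumption, but none is stated.

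I therefore expect the main obstacle to be that \(h_T\) for distant elements is not controlled by \(|x-x_0|\) in general. My plan is to bound \(h_T\) on each element by \(\min\{h_T,\ \dots\}\) and split elements into two classes.

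For elements with \(h_T\le |x-x_0|+\rho\) on \(T\), I estimate annulus by annulus:
\[
\int_{A_i}(|x-x_0|+\rho)^{1-3}\,\dd x\lesssim1 .
\]
Summing over the \(M\lesssim\log(d/\rho)\) annuli then gives the log.

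For large elements with \(h_T\gtrsim\operatorname{dist}(x_0,T)+\rho\), I use the cruder bound \(\|D^2g_1\|_{L^1(T)}\lesssim h_T^2(\operatorname{dist}(x_0,T)+\rho)^{-3}\) and instead exploit the small number of such elements. Only \(O(1)\) large elements can touch a given annulus, so each annulus contributes at most \(h_T\cdot (2^i\rho)^{-1}\cdot\)(fraction of \(T\) in that annulus). Alternatively, I would integrate \(D^2g_1\) over \(T\) directly via \(\int_T(|x-x_0|+\rho)^{-3}\dd x\lesssim (\operatorname{dist}(x_0,T)+\rho)^{-1}\), which yields \(h_T/(\operatorname{dist}(x_0,T)+\rho)\).

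If the latter ratio is not bounded, I would fall back on the author's presumable intent, which is that shape regularity plus the local resolution condition yields local quasi-uniformity on the scale \(d\), namely \(h_T\lesssim h_{T'}+\operatorname{dist}(T,T')\). This Lipschitz mesh-size property, standard for shape-regular meshes, gives \(h_T\lesssim\rho+|x-x_0|\) and closes the argument.
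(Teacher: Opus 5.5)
Your argument is correct once you commit to its last step, and it is organized differently from the paper's proof.

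\emph{The paper's route.} It handles the elements meeting $B(x_0,\rho)$ directly: there $h_T\lesssim\rho$ and $\int_T(|x-x_0|+\rho)^{-3}\,\dd x\lesssim\rho^{-1}$, so these elements contribute $O(1)$. For all remaining elements it simply cites \citet{Demlow2007}. The cut-off terms are treated as you treat them.

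\emph{Your route.} You use the comparison $h_T\lesssim \dist(x_0,T)+\rho\le |x-x_0|+\rho$ for all $x\in T$. This bounds the main term in one step by $\int_B(|x-x_0|+\rho)^{-2}\,\dd x\lesssim 1+\log(d/\rho)$. That is self-contained and needs no near/far split. It is exactly the mesh-size device the paper itself uses later, in \eqref{eq:meshsize-distance} in the proof of Lemma~\ref{lem:sum-weights}.

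\emph{What to tighten.} This comparison is not an extra grading hypothesis, and you should not hedge on it. It follows from shape regularity alone, via two facts:
\begin{itemize}
\item the continuous piecewise-affine mesh-size function has uniformly bounded gradient;
\item the segment from $x_0$ to the nearest point of $T$ lies in the convex set $B\subset\Omega$.
\end{itemize}
Together with $h_{T_0}\lesssim\rho$, these give the comparison. The local resolution condition plays no role in it.

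So state the bound up front and drop the two-class split. In that split, the claim that only $O(1)$ ``large'' elements touch a given annulus is not justified as written, and it becomes unnecessary.

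The same bound also gives $h_T\lesssim h_{T_0}+d\lesssim d$ for every element meeting $B$, using $h_{T_0}\le\overline h_\omega\le d$. That is the clean justification for the factor $h_T\lesssim d$ in your cut-off estimates, in place of the vague appeal to ``$T\subset\omega_d$-scale''.
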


\begin{proof}
By Lemma~\ref{lem:pointwise-green-estimates} with \(j=2\), we have
	\[
	|D^2g_1(x)|\lesssim \bigl(|x-x_0|+\rho\bigr)^{-3}.
	\]
We first consider the elements intersecting \(B(x_0,\rho)\). Since $\rho\eqsim h_{T_0}$, we have \(h_T\lesssim \rho\) and thus
\[
\sum_{T\cap B(x_0,\rho)\ne\emptyset}h_T \|D^2g_1\|_{L^1(T)}
	\lesssim \rho\cdot\rho^{-3}|B(x_0,\rho)|\lesssim 1.
\]
For the remaining elements away from $B(x_0,\rho)$, it has been shown in \citep{Demlow2007} that
\[
\sum_{T\cap\omega_d\neq\emptyset,\, T\cap B(x_0,\rho)=\emptyset}h_T\|D^2g_1\|_{L^1(T)}\lesssim 1+\log\frac{d}{\rho}.
\]
Combining the previous two estimates, we obtain
\begin{equation}\label{eq:weighted-D2gi-estimate}
		\sum_{T\cap \omega_d\ne\emptyset}h_T\|D^2g_1\|_{L^1(T)}
		\lesssim1+\log\frac d\rho.
	\end{equation}

It remains to pass from \(g_1\) to \(\phi_1=\xi g_1\). We have
	\[
	D^2\phi_1=\xi D^2g_1+2 D \xi\otimes D  g_1+g_1D^2\xi.
	\]
The first term is bounded by \eqref{eq:weighted-D2gi-estimate}. Using the estimates in Lemma \ref{lem:pointwise-green-estimates} and \eqref{eq:cutoff-bounds}, we obtain
\[
\begin{aligned}
 \sum_{T\cap \omega_d\ne\emptyset} h_T\|2 D \xi\otimes D  g_1\|_{L^1(T)}
&\lesssim d d^{-1} d^{-2}d^2\lesssim 1,\\
\sum_{T\cap \omega_d\ne\emptyset} h_T\|g_1D^2\xi\|_{L^1(T)}
&\lesssim d  d^{-2}d^{-1}  d^2\lesssim 1.
\end{aligned}
\]
Combining these three estimates proves Lemma \ref{lem:D2phi-gradient-weighted-L1-estimate}.
\end{proof}
%%%%%%%%%%%%%%%%%%%%%%

\begin{lemma}\label{lem:Djpsi}
For the truncated Green's function \(\psi\), we have
	\begin{equation*}
		|D^j\psi (x)|
		\lesssim \bigl(|x-x_0|+\rho\bigr)^{-j},\quad j\geq1.
	\end{equation*}
\end{lemma}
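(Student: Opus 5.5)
The plan is to expand $D^j\psi=D^j(\xi\mathfrak g)$ by the Leibniz rule and bound each term with Lemma~\ref{lem:pointwise-green-estimates-displacement} and the cut-off properties \eqref{eq:cutoff-bounds}. For $j\ge1$ we write
\[
D^j\psi=\sum_{l=0}^{j}\binom{j}{l}D^{l}\xi\otimes D^{j-l}\mathfrak g ,
\]
understood as a symmetrized tensor product. The term with $l=0$ is $\xi D^j\mathfrak g$. Since $0\le\xi\lesssim1$, Lemma~\ref{lem:pointwise-green-estimates-displacement} gives $|\xi D^j\mathfrak g(x)|\lesssim(|x-x_0|+\rho)^{-j}$ directly.

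For $l\ge1$, the factor $D^l\xi$ vanishes outside the annulus $\{c_1d\le|x-x_0|\le c_2d\}$. There we have $|x-x_0|+\rho\eqsim d$, because $\rho\le d$ and $c_1d\le|x-x_0|$. We take the cases $l<j$ and $l=j$ separately.

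\emph{Case $1\le l<j$.} Combining $|D^l\xi|\lesssim d^{-l}$ with $|D^{j-l}\mathfrak g|\lesssim(|x-x_0|+\rho)^{-(j-l)}\eqsim d^{-(j-l)}$ gives a bound $d^{-j}\eqsim(|x-x_0|+\rho)^{-j}$.

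\emph{Case $l=j$.} This term carries $\mathfrak g$ undifferentiated. Lemma~\ref{lem:pointwise-green-estimates-displacement} gives
\[
|\mathfrak g(x)|\lesssim 1+\bigl|\log\bigl((|x-x_0|+\rho)/d\bigr)\bigr|.
\]
On the annulus the ratio $(|x-x_0|+\rho)/d$ lies in $[c_1,\,c_2+1]$, so the logarithm is bounded by a constant depending only on $c_1,c_2$. Hence $|\mathfrak g|\lesssim1$ there. It follows that $|D^j\xi\,\mathfrak g|\lesssim d^{-j}\eqsim(|x-x_0|+\rho)^{-j}$.

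Summing the finitely many terms, with constants depending on $j$, proves the bound for every $x\in B$. Outside the annulus and away from the $l=0$ term nothing else contributes, so this covers all cases.

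The only point that needs care is the $l=j$ term, since $\mathfrak g$ itself carries a logarithmic factor. The key observation is that this factor is harmless exactly on the support of $D\xi$, because there $|x-x_0|$ is comparable to $d$. I do not expect any genuine obstacle beyond recording that $\rho\le d$ guarantees $|x-x_0|+\rho\eqsim d$ on that annulus.
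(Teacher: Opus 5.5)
Your proposal is correct and follows essentially the same argument as the paper: the Leibniz rule, with the logarithmic factor of $\mathfrak g$ rendered harmless on the cut-off annulus, where $|x-x_0|+\rho\eqsim d$ because $\rho\le d$. The only cosmetic difference is bookkeeping. The paper splits by region (where $\xi\equiv1$, the transition annulus, and where $\xi\equiv0$), whereas you split by Leibniz terms and bound the $l=0$ term everywhere using $\|\xi\|_{L^\infty}\lesssim1$, which avoids needing $\xi\equiv1$ on the inner ball.
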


\begin{proof} 
We consider the source region, the cut-off transition region, and the region where the cut-off function vanishes separately. In the source region, \(\xi\equiv 1\), so the asserted estimate follows from Lemma~\ref{lem:pointwise-green-estimates-displacement}.
In the cut-off transition region, we have \(|x-x_0|+\rho\eqsim d\).
Thus, the logarithmic factor in the estimate for \(\mathfrak{g}\) is uniformly bounded, and $|D^j\mathfrak{g}(x)|\lesssim d^{-j}$ for $j\geq1$. It then follows from the product rule, the bounds for \(\xi\), and Lemma \ref{lem:pointwise-green-estimates-displacement} that
	\[
	\begin{aligned}
		|D^j\psi (x)|
		\lesssim \sum_{\ell=0}^j|D^\ell\xi(x)||D^{j-\ell}\mathfrak{g} (x)|
		\lesssim \bigl(|x-x_0|+\rho\bigr)^{-j}.
	\end{aligned}
	\]
Finally, in the remaining region where $\xi\equiv0$, we have $\psi\equiv0$, so the asserted estimate follows immediately. This completes the proof.
\end{proof}

\section{A Posteriori Error Estimates for the CR Method}\label{sect:CR} In this section, we prove the localized pointwise a posteriori error estimate for the CR method in Theorem \ref{thm:main_CR}. First, we recall the CR interpolation operator \(I_{h}:H^1_0(\Omega)\longrightarrow V_h\) defined by 
\begin{equation*}
    \int_F I_{h}v \,\dd s = \int_F v \,\dd s, \qquad \forall F \in \mathcal{E}_h .
\end{equation*}
    For every \(T\in\mathcal T_h\) and \(v\in W^{2,1}(T)\), we have the local approximation estimate
	\begin{equation}\label{eq:poisson-interpolation-approx}
		\|v-I_{h}v\|_{L^1(T)}+h_T\| D (v-I_{h}v)\|_{L^1(T)} \lesssim h_T^2\|D^2v\|_{L^1(T)}.
	\end{equation}
Let $W^{k,p}(\mathcal{T}_h)$ denote the piecewise $W^{k,p}$ Sobolev space with respect to $\mathcal{T}_h$. The CR interpolation is similar to a Galerkin projection:
	\begin{equation*}
		a_h\bigl(w_h,v-I_{h}v\bigr)=0,\qquad \forall w_h\in V_h, \qquad \forall v\in H^1_0(\Omega)\cap W^{2,1}(\mathcal T_h).
	\end{equation*}
	The Galerkin projection property and the discrete problem \eqref{eq:CR} yield the identity
	\begin{equation}\label{eq:poisson-residual_id}
		a_h(u-u_h,v)=(f,v-I_{h}v),\qquad \forall v\in H^1_0(\Omega)\cap W^{2,1}(\mathcal T_h).
	\end{equation}

Let $x_0$ be an arbitrary point in $\overline{\omega}$, choose $T_0\in\mathcal T_h$ such that $x_0\in\overline{T}_0$ and $T_0\cap\omega\ne\emptyset$, and recall that $B=B(x_0,d/2)$.
For the CR method, we assume that \(u\in C^{2,\alpha}(T_0)\) for some \(0<\alpha\le1\).
Let \(P(D^{\gamma}u)\) be the Taylor polynomial of \(D^{\gamma}u\) at \(x_0\) of degree \(2-|\gamma|\). 
Set $r=2-|\gamma|$ and $y=x-x_0$. We first derive a pointwise estimate for the difference between $D^{\gamma}u$ and its Taylor polynomial $P(D^{\gamma}u)$ on $T_0$.
	\[
	\begin{aligned}
	D^{\gamma}u(x)-P(D^{\gamma}u)(x)
	&=\frac{1}{(r-1)!}\int_0^1(1-t)^{r-1}\\
	&\quad\times\Bigl(D^{r+\gamma}u(x_0+ty)-D^{r+\gamma}u(x_0)\Bigr)[y^r]\,\mathrm dt\\
    &\lesssim |x-x_0|^{2-|\gamma|+\alpha}|u|_{C^{2,\alpha}(T_0)}, \qquad x\in T_0.
	\end{aligned}
	\]
Since \(\delta_{x_0,\rho}\) reproduces \(P(D^{\gamma}u)\), using the pointwise estimate above, it follows that
\begin{align*}
    \left|D^{\gamma}u(x_0)-\bigl(D^{\gamma}u,\delta_{x_0,\rho}\bigr)_{T_0}\right|
&=\left|\int_{T_0} \bigl(P(D^{\gamma}u)-D^{\gamma}u\bigr)\delta_{x_0,\rho}\,\dd x\right|\\
&\lesssim \rho^{2-|\gamma|+\alpha}|u|_{C^{2,\alpha}(T_0)}.
\end{align*}
Noting that \(\delta_{x_0,\rho}\) also reproduces $D_h^\gamma u_h$ on $T_0$, we obtain
\begin{equation}\label{eq:regularized_evaluation}
	\left|D_h^{\gamma}e(x_0)\right|
	\le \left|\bigl(e,D^{\gamma}\delta_{x_0,\rho}\bigr)_{T_0}\right|
	+C\rho^{2-|\gamma|+\alpha}|u|_{C^{2,\alpha}(T_0)}.
\end{equation}
The second term is the regularization error. \citet{Demlow2007} employed the same technique to derive $W^{1,\infty}$ error estimates for conforming finite element discretizations of Poisson's equation.

\begin{proof}[Proof of Theorem \ref{thm:main_CR}]
Recall that $\phi_m=\xi g_m$ with $m=0$ or 1. Direct calculation shows that
\begin{align*}
    -\Delta &\phi_m = -\xi\Delta g_m - C_{m},\\
    C_{m} &:= 2 D \xi\cdot D  g_m+g_m\Delta\xi.
\end{align*}
Using $-\Delta g_m=D^\gamma\delta_{x_0,\rho}$ with $|\gamma|=m$ and the identity \eqref{eq:poisson-residual_id},
we obtain
\begin{align*}
	(-1)^m(D_h^{\gamma}e,\delta_{x_0,\rho})&=(e,-\Delta g_m)\nonumber\\
	&=(e,-\Delta\phi_m)+(e,C_{m})\nonumber\\
	&=a_h(e,\phi_m)+\sum_{F\in\mathcal E_h}\int_F\llbracket u_h\rrbracket\,\partial_{n}\phi_m\,\mathrm ds+(e,C_{m})\nonumber\\
	&=\bigl(f,\phi_m-I_{h}\phi_m\bigr)+\sum_{F\in\mathcal E_h}\int_F\llbracket u_h\rrbracket\,\partial_{n}\phi_m\,\mathrm ds+(e,C_{m})\\
	&:=I_{m,1}+I_{m,2}+I_{m,3}.
\end{align*}
Since \(\operatorname{supp}\phi_m\subset \omega_{d}\), only elements intersecting \(\omega_{d}\) contribute to \(I_{m,1}\). Using the interpolation error estimate
\eqref{eq:poisson-interpolation-approx}, Lemma \ref{lem:D2phi0}, and Lemma \ref{lem:D2phi-gradient-weighted-L1-estimate}, we obtain
\[
\begin{aligned}
    |I_{m,1}|&\le\sum_{T\cap \omega_{d}\ne\emptyset}\|f\|_{L^\infty(T)}\|\phi_m-I_{h}\phi_m\|_{L^1(T)}\\
	&\lesssim \sum_{T\cap \omega_{d}\ne\emptyset}h_T^2\|f\|_{L^\infty(T)}\|D^2\phi_m\|_{L^1(T)}\\
&\lesssim \max_{T\cap \omega_{d}\ne\emptyset}h_T^{2-m}\|f\|_{L^\infty(T)}\sum_{T\cap \omega_{d}\ne\emptyset}h_T^m\|D^2\phi_m\|_{L^1(T)}\\
&\lesssim L_{\rho,d}\max_{T\cap \omega_{d}\ne\emptyset}h_T^{2-m}\|f\|_{L^\infty(T)}.
\end{aligned}
\]

We define the edge patch and element patch by 
	\begin{align*}
		\Omega_{F}&=\bigcup\{\,K\in\Th:\ F\subset\partial K\,\},\\
	\Omega_{T}&=\bigcup\{\,K\in\Th:\ K = T \ \text{or}\ K\ \text{shares an edge with}\ T\,\}.	
	\end{align*}
For the edge term \(I_{m,2}\), the CR degrees of freedom imply that \(\llbracket u_h\rrbracket\) is orthogonal to the space of constants. Thus, for an arbitrary constant $c_F$,
\[
I_{m,2}=\sum_{F\in\mathcal E_h}\int_F\llbracket u_h\rrbracket\bigl(\partial_{n}\phi_m-c_F\bigr)\,\mathrm ds.
\]
Taking the infimum and using a trace inequality, we have
\begin{align*}
|I_{m,2}|&\le\sum_{F\cap \omega_{d}\ne\emptyset}\|\llbracket u_h\rrbracket\|_{L^\infty(F)}
	\inf_{c_F\in\mathbb R}\|\partial_{n}\phi_m-c_F\|_{L^1(F)}\\
	&\lesssim \sum_{F\cap \omega_{d}\ne\emptyset}\|\llbracket u_h\rrbracket\|_{L^\infty(F)}
	\|D^2\phi_m\|_{L^1(\Omega_{F}\cap B)}.
\end{align*}
For \(m=0\), the finite overlap of the patches \(\Omega_{F}\) and Lemma \ref{lem:D2phi0} yields
\begin{align*}
	|I_{0,2}|&\lesssim \max_{\substack{T\cap \omega_{d}\ne\emptyset\\F\subset\partial T}}
	\|\llbracket u_h\rrbracket\|_{L^\infty(F)}
	\sum_{F\cap \omega_{d}\ne\emptyset}\|D^2\phi_0\|_{L^1(\Omega_{F}\cap B)}\nonumber\\
	&\lesssim L_{\rho,d}\max_{\substack{T\cap \omega_{d}\ne\emptyset\\F\subset\partial T}}
	\|\llbracket u_h\rrbracket\|_{L^\infty(F)}.
\end{align*}
For \(m=1\), an argument analogous to that for \(I_{1,1}\) yields
\begin{equation*}
    |I_{1,2}|\lesssim L_{\rho,d}\max_{\substack{T\cap \omega_{d}\ne\emptyset\\F\subset\partial T}} h_F^{-1}\|\llbracket u_h\rrbracket\|_{L^\infty(F)}.
\end{equation*}

The term \(I_{m,3}=(e,C_m)\) is the pollution term. Let $q$ be the conjugate exponent of $p$. Since $C_m\in C_0^\infty(B)$, the definition of the negative norm gives
\begin{equation*}
|I_{m,3}|\le
\|e\|_{W^{-k,p}(\omega_d)}
\|C_m\|_{W^{k,q}(\omega_d)}.
\end{equation*}
Let $A_\xi:=\{x\in B:c_1d\le |x-x_0|\le c_2d\}$.
Then $\operatorname{supp}C_m\subset A_\xi$ and $|A_\xi|\lesssim d^2$. Moreover, on $A_\xi$, Lemma~\ref{lem:pointwise-green-estimates} and \eqref{eq:cutoff-bounds} imply
\begin{equation*}
|D^j g_m|\lesssim d^{-m-j},\qquad
|D^j\xi|\lesssim d^{-j},
\qquad j\ge0.
\end{equation*}
Consequently, the Leibniz rule yields, for every multi-index $\beta$ with $|\beta|\le k$,
\begin{equation*}
\|D^\beta C_m\|_{L^\infty(A_\xi)}
\lesssim d^{-m-2-|\beta|}.
\end{equation*}
If $1\le q<\infty$, it follows that
\begin{equation*}
\|D^\beta C_m\|_{L^q(\omega_d)}
\le |A_\xi|^{1/q}
\|D^\beta C_m\|_{L^\infty(A_\xi)}
\lesssim d^{-m-2-|\beta|+2/q}.
\end{equation*}
For $q=\infty$, the same estimate holds with $2/q=0$, directly from the preceding pointwise bound. Since $d\le \operatorname{diam}(\Omega)$ and $\Omega$ is fixed, the preceding estimates give
\begin{equation*}
\begin{aligned}
\|C_m\|_{W^{k,q}(\omega_d)}
&\lesssim
\left(\sum_{|\beta|\le k}
d^{q(-m-2-|\beta|+2/q)}\right)^{1/q}
\lesssim d^{-m-k-2+2/q}, &&1\le q<\infty,\\
\|C_m\|_{W^{k,\infty}(\omega_d)}
&\lesssim \max_{|\beta|\le k}d^{-m-2-|\beta|}
\lesssim d^{-m-k-2}, &&q=\infty.
\end{aligned}
\end{equation*}
In either case, $-m-k-2+2/q=-m-k-2/p$. Therefore,
\begin{equation*}
|I_{m,3}|\lesssim
d^{-m-k-2/p}
\|u-u_h\|_{W^{-k,p}(\omega_d)}.
\end{equation*}

Combining the estimates of \(I_{m,1}\)--\(I_{m,3}\) with \eqref{eq:regularized_evaluation} gives
\begin{equation*}
		|D_h^{\gamma}e(x_0)|\lesssim 
		L_{\rho,d}\max_{T\cap \omega_{d}\ne\emptyset}\eta_{|\gamma|}(T)
		+d^{-|\gamma|-k-2/p}\|u-u_h\|_{W^{-k,p}(\omega_{d})}
		+\rho^{\alpha+2-|\gamma|}|u|_{C^{2,\alpha}(\omega_{d})}.
\end{equation*}
Since \(x_0\) is arbitrary and $\rho\lesssim h_{T_0}$, a posteriori estimates for \(\|e\|_{L^\infty(\omega;\mathcal T_h)}\) and \(\|D_h e\|_{L^\infty(\omega;\mathcal T_h)}\) follow.
\end{proof}

It has been shown in \citep{DariDuranPadra2000} that $\eta_0(T)$ is a lower bound for $\|u-u_h\|_{L^\infty(\Omega_T;\mathcal{T}_h)}$ up to data oscillation. Similarly,
using standard bubble-function techniques, we can verify the efficiency of $\eta_1(T)$.
\begin{theorem}\label{thm:poisson-local-efficiency}
		For each \(T\in\mathcal T_h\), let \(f_T\in\mathbb R\) be an arbitrary constant approximation of \(f\) on \(T\). Then, for \(m=0,1\),
		\[
		\eta_m(T)\lesssim \|D_h^m(u-u_h)\|_{L^\infty(\Omega_{T};\mathcal T_h)}+h_T^{2-m}\|f-f_T\|_{L^\infty(T)}.
		\]
	\end{theorem}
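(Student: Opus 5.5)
We need to show $\eta_m(T)\lesssim \|D_h^m e\|_{L^\infty(\Omega_T;\mathcal T_h)}+h_T^{2-m}\|f-f_T\|_{L^\infty(T)}$ with $e=u-u_h$. The estimator has two pieces, the volume residual $h_T^{2-m}\|f\|_{L^\infty(T)}$ and the jump term $h_T^{-m}\max_{F\subset\partial T}\|\jump{u_h}\|_{L^\infty(F)}$, and we bound them separately with the standard bubble-function technique in $L^\infty$/$L^1$ duality. For $m=0$ the result is already contained in \citep{DariDuranPadra2000}, but the argument below treats both values of $m$ uniformly.

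\textbf{Jump term.} This part is elementary and needs no bubbles. Since $u\in H_0^1(\Omega)$ is continuous across $F$ and vanishes on $\partial\Omega$, we have $\jump{u_h}=-\jump{e}$ on every edge $F$. For $m=0$ this immediately gives $\|\jump{u_h}\|_{L^\infty(F)}\le 2\|e\|_{L^\infty(\Omega_F;\mathcal T_h)}$.

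For $m=1$ we use that $\jump{u_h}|_F$ is an affine function with zero mean on $F$ (the CR condition), so it vanishes at the midpoint of $F$. Integrating the tangential derivative from the midpoint gives $\|\jump{u_h}\|_{L^\infty(F)}\le h_F\|\partial_t\jump{u_h}\|_{L^\infty(F)}$. Moreover $\partial_t\jump{u_h}=-\jump{\partial_t e}$, because $u$ has a continuous tangential trace on $F$ (we need $u\in W^{1,\infty}$ near $F$; otherwise the right-hand side is infinite and there is nothing to prove). Hence $h_F^{-1}\|\jump{u_h}\|_{L^\infty(F)}\lesssim \|D_h e\|_{L^\infty(\Omega_F;\mathcal T_h)}$. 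Both patches $\Omega_F$ with $F\subset\partial T$ lie in $\Omega_T$, which finishes the jump term.

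\textbf{Volume term.} Let $b_T$ be the cubic element bubble, and test \eqref{eq:CR}-type identities with $v=f_Tb_T\in H_0^1(T)$. Since $\Delta u_h=0$ on $T$, integration by parts gives
\begin{equation*}
f_T^2\int_T b_T\,\dd x=(f_T-f,f_Tb_T)_T+(D e,D(f_Tb_T))_T .
\end{equation*}
Using $\int_T b_T\eqsim h_T^2$ and $\|Db_T\|_{L^1(T)}\lesssim h_T$, we obtain
\begin{equation*}
|f_T|\lesssim \|f-f_T\|_{L^\infty(T)}+h_T^{-1}\|D_h e\|_{L^\infty(T)} .
\end{equation*}
Multiplying by $h_T$ gives the claim for $m=1$.

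For $m=0$ we integrate by parts once more, so that the pairing becomes $-(e,\Delta(f_Tb_T))_T$; no boundary terms arise because $b_T$ and $Db_T$ vanish on $\partial T$ for $b_T=\lambda_1\lambda_2\lambda_3$. Then $\|\Delta b_T\|_{L^1(T)}\lesssim 1$ yields $h_T^2|f_T|\lesssim h_T^2\|f-f_T\|_{L^\infty(T)}+\|e\|_{L^\infty(T)}$. In both cases the triangle inequality $\|f\|_{L^\infty(T)}\le|f_T|+\|f-f_T\|_{L^\infty(T)}$ completes the volume estimate.

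\textbf{Main obstacle.} The delicate step is the $m=0$ volume bound. The boundary term $\int_{\partial T}e\,\partial_n(f_Tb_T)\,\dd s$ does not obviously vanish, since $\partial_n b_T\ne 0$ on $\partial T$ for the cubic bubble. If this is a problem, we replace $b_T$ by its square $b_T^2$. Then both $b_T^2$ and $D b_T^2$ vanish on $\partial T$, and we still have $\int_T b_T^2\eqsim h_T^2$ and $\|\Delta b_T^2\|_{L^1(T)}\lesssim 1$ by scaling. With this choice the integration by parts produces no boundary terms and the argument above goes through unchanged.
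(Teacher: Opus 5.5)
Your proposal is correct and follows the route the paper only sketches. The paper cites Dari--Dur\'an--Padra for $m=0$ and invokes standard bubble-function arguments for $m=1$. You control the jump terms directly, using $\jump{u}=0$ and the fact that the CR jump is affine with zero mean on each edge, and you control the volume residual by testing the continuous equation with an element bubble. The one slip is your claim that $Db_T$ vanishes on $\partial T$ for $b_T=\lambda_1\lambda_2\lambda_3$; this is false, since $\partial_n b_T\ne 0$ on the open edges. Your $b_T^2$ replacement repairs it. Alternatively, the boundary term is harmless anyway, because $\bigl|\int_{\partial T}e\,\partial_n b_T\,\dd s\bigr|\le\|e\|_{L^\infty(T)}\|\partial_n b_T\|_{L^1(\partial T)}\lesssim\|e\|_{L^\infty(T)}$.
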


\section{A Posteriori Error Estimates for the Morley Method}\label{sect:Morley}
In this section, we prove the localized pointwise a posteriori error estimate for the Morley method in Theorem \ref{thm:main_Morley}. The analysis relies on the Morley interpolation operator \(\Pi_{h}:H^2_0(\Omega)\longrightarrow W_h\) defined by
	\begin{equation*}
		(\Pi_{h}v)(a)=v(a),
		\quad   \forall a \in \mathcal V_h,\qquad
		\int_F \partial_n \Pi_{h}v\,\dd s=\int_F \partial_n v\,\dd s,
		\quad \forall F\in\Eh .
	\end{equation*}
It is known that for each \(T\in\Th\) and \(0\le j\le2\),
	\begin{equation*}
		\|D^j(v-\Pi_{h}v)\|_{L^1(T)}\lesssim h_T^{3-j}\|D^3v\|_{L^1(T)}, \qquad\forall v\in W^{3,1}(T).
	\end{equation*}
The Morley method satisfies an analogous Galerkin projection property:
\begin{equation*}
	b_h\bigl(w_h,v-\Pi_{h}v\bigr)=0,\qquad \forall w_h\in W_h, \qquad \forall v\in H^2_0(\Omega)\cap W^{3,1}(\mathcal{T}_h).
\end{equation*}
This property was proved in \citep{MR3054354}.
Combining it with \eqref{eq:Morley} yields 
\begin{equation}\label{eq:morley-residual_id}
	b_h(U-U_h,v)=(f,v-\Pi_{h}v),\qquad \forall v\in H^2_0(\Omega)\cap W^{3,1}(\mathcal{T}_h).
\end{equation}

\subsection{Weight Functions}
Recall that $h_{T_0}\lesssim\rho\leq \kappa h_{T_0}$, where $\kappa$ is sufficiently small. We introduce the distance functions
\[
a_T:=\dist(x_0,T)+\rho,\qquad a_F:=\dist(x_0,F)+\rho.
\]
Here $a_T$ and $a_F$ are of the same order as $|x-x_0|+\rho$ for $x\in T$ and $x\in F$, respectively. The following novel weight functions are key to our a posteriori error analysis for the Morley method.
\begin{align*}
	\Theta_T:=\frac{h_{T}^2}{a_T(a_T+h_{T})^2},\qquad
	\Phi_T:=\frac12\left(\frac1{a_T^2}-\frac1{(a_T+h_{T})^2}\right).
\end{align*}
The edgewise weights $\Theta_F$ and $\Phi_F$ are defined similarly.
\begin{lemma}\label{lem:sum-weights}
We have
	\begin{equation*}
	\sum_{T\cap B\ne\emptyset}h_T\Theta_T+ \sum_{T\cap B\ne\emptyset}h_T^2\Phi_T \lesssim 1+\log\frac{d}{\rho}.
	\end{equation*}  
\end{lemma}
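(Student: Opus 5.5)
The plan is to compare both sums with integrals of $|x-x_0|^{-2}$ over $B$. These integrals are logarithmic by the same dyadic-annulus decomposition \eqref{dyadic-annuli} used in the proof of Lemma \ref{lem:D2phi0}.

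\emph{Pointwise bounds for the weights.} First note that $\Phi_T=\frac{h_T(2a_T+h_T)}{2a_T^2(a_T+h_T)^2}$, so
\[
h_T^2\Phi_T\le \frac{h_T^3}{a_T^2(a_T+h_T)}\le \frac{h_T^3}{a_T(a_T+h_T)^2}\cdot\frac{a_T+h_T}{a_T}.
\]
It is cleaner to bound both terms separately by the same model quantity. In each case I will distinguish whether $a_T\ge h_T$ or $a_T<h_T$:
\[
h_T\Theta_T=\frac{h_T^3}{a_T(a_T+h_T)^2}\lesssim
\begin{cases} h_T^3a_T^{-3}\le h_T^2a_T^{-2}, & a_T\ge h_T,\\ h_T a_T^{-1}, & a_T<h_T,\end{cases}
\qquad
h_T^2\Phi_T\lesssim\begin{cases} h_T^3a_T^{-3}, & a_T\ge h_T,\\ 1, & a_T<h_T.\end{cases}
\]

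\emph{Far elements, $a_T\ge h_T$.} For these elements $a_T\eqsim |x-x_0|+\rho$ uniformly for $x\in T$. Hence $h_T^2a_T^{-2}\lesssim\int_T(|x-x_0|+\rho)^{-2}\,\dd x$ by shape regularity ($|T|\eqsim h_T^2$). All such $T$ intersect $B$ and have $h_T\le a_T\lesssim d$, so they lie in $B(x_0,Cd)$. Summing over these elements gives
\[
\int_{B(x_0,Cd)}(|x-x_0|+\rho)^{-2}\,\dd x\lesssim 1+\log\frac d\rho,
\]
by the annulus count, since each annulus contributes $O(1)$.

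\emph{Near elements, $a_T<h_T$.} This is the main obstacle, because a priori there could be many such elements, or elements with $a_T\ll h_T$ giving large $h_T/a_T$. Here I use the condition $\rho\eqsim h_{T_0}$ together with shape regularity and mesh conformity. Any element with $\dist(x_0,T)<h_T$ lies in a patch of bounded generations around $T_0$, because by local quasi-uniformity neighboring elements have comparable size. Therefore $h_T\eqsim h_{T_0}\eqsim\rho$, and the number of such elements is bounded by a constant depending only on $\gamma_{\rm sh}$. For these elements $a_T\ge\rho\gtrsim h_T$, so $h_T/a_T\lesssim 1$ and $h_T^2\Phi_T\lesssim1$. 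Their total contribution is therefore $O(1)$.

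The step I expect to need the most care is justifying that $\dist(x_0,T)<h_T$ forces $h_T\lesssim h_{T_0}$. If $T$ is a large element whose boundary passes near $x_0$, then $T$ must touch a vertex patch of an element containing $x_0$. Shape regularity bounds the ratio of sizes of elements sharing a vertex, and the angle condition prevents a large element from coming within distance $\ll h_T$ of $x_0$ without containing a vertex near $x_0$. I will make this precise by showing that such a $T$ meets $B(x_0,c h_T)$ only if it shares a vertex with the patch of $T_0$. Adding the near and far contributions proves the lemma.
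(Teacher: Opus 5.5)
Your far-element estimate is fine; it is essentially the paper's area-comparison argument. The near-element step, however, rests on a false claim. The condition $a_T<h_T$ does not force $h_T\eqsim h_{T_0}$, and the number of such elements is not bounded: on a mesh graded toward $x_0$ there are near elements at every scale between $\rho$ and $d$.

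For a concrete example, take $x_0=0$. In the first quadrant, fill each dyadic layer $[0,2r]^2\setminus[0,r)^2$ with three squares of side $r$. Triangulate them using transition vertices $(r,r/2)$ and $(r/2,r)$ so that consecutive layers match conformingly, and reflect the construction to the other quadrants. The triangle with vertices $(r,0)$, $(2r,0)$, $(r,r/2)$ is shape regular, with $\dist(x_0,T)=r$ and $h_T=\tfrac{\sqrt5}{2}r$. Hence $a_T=r+\rho<h_T$ whenever $r\ge 9\rho$. This happens in every layer, so there are $\eqsim\log(d/\rho)$ near elements, with $h_T$ ranging up to $d$.

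Your heuristic, that a large element cannot come within distance $\ll h_T$ of $x_0$, is valid only for $\dist(x_0,T)<c\,h_T$ with $c$ small. That does not match your case split at constant one. Relatedly, the displayed bound $h_T^2\Phi_T\lesssim 1$ for $a_T<h_T$ is not available a priori. In general one only has $h_T^2\Phi_T\le h_T^2/(2a_T^2)$.

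The missing ingredient is the quantitative bound $h_T\lesssim a_T$ for \emph{every} $T$ meeting $B$. The paper derives it from the continuous piecewise affine mesh-size function $\widetilde h$ with nodal values $h_z$. This function satisfies $\widetilde h|_K\eqsim h_K$ and $\|\nabla\widetilde h\|_{L^\infty(K)}\lesssim 1$. Integrating along the segment from $x_0$ to the point $x_T\in\overline T$ nearest to $x_0$ gives $h_T\lesssim h_{T_0}+\dist(x_0,T)\lesssim a_T$.

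Once this is known, no near/far split is needed. Both weights are bounded by $h_T^3/a_T^3\lesssim |T|/a_T^2\lesssim\int_T(|x-x_0|+\rho)^{-2}\,\dd x$ for all $T$ meeting $B$. Your integral over $B(x_0,Cd)$, or the paper's grouping of elements by $a_T\in[2^i\rho,2^{i+1}\rho)$, then yields $1+\log(d/\rho)$. In the example above the near elements each still contribute $O(1)$. That holds only because $h_T\eqsim a_T$ there, not because they are few or of size $\rho$.
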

\begin{proof}
By the definitions of \(\Theta_T\) and \(\Phi_T\), we have
\[
	h_T\Theta_T\lesssim\frac{h_T^3}{a_T^3},\qquad
	h_T^2\Phi_T=\frac{h_T^3(2a_T+h_T)}{2a_T^2(a_T+h_T)^2}\lesssim\frac{h_T^3}{a_T^3}.
\]

For each mesh vertex $z$, define
\begin{equation*}
h_z:=\max\{h_K:K\in\mathcal T_h,\ z\in\overline K\},
\end{equation*}
and let $\widetilde h$ be the continuous piecewise-affine function with nodal values $\widetilde h(z)=h_z$. Shape regularity gives a uniform positive lower bound for the angles of all elements. Conformity allows the elements meeting at a vertex to be ordered so that consecutive elements share an edge, while the angle bound makes the length of this chain uniformly bounded. Since the diameters of two elements sharing an edge are comparable, all element diameters in a vertex patch are uniformly comparable. Consequently,
\begin{equation}\label{eq:lipschitz-meshsize}
\widetilde h|_K\eqsim h_K,
\qquad
\|\nabla\widetilde h\|_{L^\infty(K)}\lesssim1,
\qquad K\in\mathcal T_h.
\end{equation}
The constants depend only on the shape-regularity constant.

Let $T\cap B\ne\emptyset$, and choose $x_T\in\overline T$ such that $|x_T-x_0|=\dist(x_0,T)$. Then $x_T\in B$, and the line segment joining $x_0$ and $x_T$ is contained in $B$. Hence \eqref{eq:lipschitz-meshsize}, $x_0\in\overline T_0$, and $h_{T_0}\lesssim\rho$ imply
\begin{equation}\label{eq:meshsize-distance}
h_T\lesssim \widetilde h(x_T)
\lesssim \widetilde h(x_0)+|x_T-x_0|
\lesssim h_{T_0}+\dist(x_0,T)
\lesssim a_T.
\end{equation}
Thus, by shape regularity,
\begin{equation*}
\frac{h_T^3}{a_T^3}
\lesssim \frac{|T|}{a_T^2}.
\end{equation*}

Choose $M\ge0$ such that
$2^{M-1}\rho<d\le2^M\rho$. Then
$d/2+\rho<2^{M+1}\rho$ and
$M\lesssim1+\log(d/\rho)$. For $i=0,\ldots,M$, set
\begin{equation*}
\mathcal L_i:=\left\{T\in\mathcal T_h:T\cap B\ne\emptyset,\quad
2^i\rho\le a_T<2^{i+1}\rho\right\},
\end{equation*}
so that the sets $\mathcal L_i$ cover all elements intersecting $B$. If $T\in\mathcal L_i$, then \eqref{eq:meshsize-distance} gives
\begin{equation*}
h_T\lesssim a_T\lesssim2^i\rho,
\qquad
T\subset B(x_0,C2^i\rho).
\end{equation*}
Since the element interiors are pairwise disjoint, shape regularity yields
\begin{align*}
\sum_{T\cap B\ne\emptyset}\frac{h_T^3}{a_T^3}
&\lesssim
\sum_{i=0}^M(2^i\rho)^{-2}
\sum_{T\in\mathcal L_i}|T|\\
&\lesssim
\sum_{i=0}^M
\frac{|B(x_0,C2^i\rho)|}{(2^i\rho)^2}
\lesssim1+M
\lesssim1+\log\frac d\rho.
\end{align*}
Combining these estimates completes the proof of Lemma~\ref{lem:sum-weights}.
\end{proof}

\begin{lemma}\label{lem:D3psi_ThetaT}
For each \(T\in\mathcal{T}_h\), we have
	\begin{equation*}
		\|D^3\psi \|_{L^1(T)}
		\lesssim \Theta_T.
	\end{equation*}
\end{lemma}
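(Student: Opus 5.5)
We need to show $\|D^3\psi\|_{L^1(T)}\lesssim \Theta_T = h_T^2/(a_T(a_T+h_T)^2)$. The plan is to start from the pointwise bound of Lemma~\ref{lem:Djpsi} with $j=3$, namely $|D^3\psi(x)|\lesssim(|x-x_0|+\rho)^{-3}$, integrate it over $T$, and then compare the result with $\Theta_T$ in two regimes according to the relative size of $h_T$ and $a_T$.

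\emph{Case $h_T\le a_T$.} Here $\Theta_T\eqsim h_T^2/a_T^3$. For $x\in T$ we have $|x-x_0|\ge\dist(x_0,T)$, so $|x-x_0|+\rho\ge a_T$. Hence
\[
\|D^3\psi\|_{L^1(T)}\lesssim a_T^{-3}|T|\lesssim h_T^2a_T^{-3}\eqsim\Theta_T .
\]

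\emph{Case $h_T> a_T$.} Now $\Theta_T\eqsim 1/a_T$, and this is the regime where $T$ is close to $x_0$ relative to its size. In fact \eqref{eq:meshsize-distance} gives $h_T\lesssim a_T$ whenever $T\cap B\neq\emptyset$, so this case occurs only up to constants. Still, a self-contained argument is useful. Since $T\subset B(x_0,\dist(x_0,T)+h_T)$, we can integrate in polar coordinates centred at $x_0$, restricted to the region $|x-x_0|\ge\dist(x_0,T)$:
\[
\int_T(|x-x_0|+\rho)^{-3}\,\dd x\lesssim\int_{\dist(x_0,T)}^{\infty}(r+\rho)^{-3}r\,\dd r\lesssim \int_{\dist(x_0,T)}^\infty (r+\rho)^{-2}\,\dd r= a_T^{-1}.
\]
This is $\lesssim\Theta_T$, because $h_T\gtrsim a_T$ gives $(a_T+h_T)^2\eqsim h_T^2$.

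\emph{Elements outside the support.} If $T\cap B=\emptyset$ or $T$ misses $\supp\psi$, then $\psi\equiv0$ on $T$ and the bound is trivial. Elements only partially inside $B$ are covered by the same pointwise bound, because $\psi$ vanishes outside $B$.

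\emph{Main obstacle.} The key step is to ensure that the pointwise estimate of Lemma~\ref{lem:Djpsi} is valid uniformly on all of $T$, including the part of $T$ near the source region and near the cut-off region. The polar integration in the second case then uses only that the two-dimensional measure of $\{r\le|x-x_0|\le r+\dd r\}$ is $\lesssim r\,\dd r$. Combining the two cases, using $a_T+h_T\eqsim\max(a_T,h_T)$, yields $\|D^3\psi\|_{L^1(T)}\lesssim\min\{h_T^2a_T^{-3},a_T^{-1}\}\eqsim\Theta_T$.
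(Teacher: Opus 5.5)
Your proof is correct and follows essentially the same route as the paper. You split into the far-field case $a_T\ge h_T$, bounding the integrand by $a_T^{-3}$, and the near-field case $a_T<h_T$, using the pointwise bound of Lemma~\ref{lem:Djpsi} and polar integration over $\{|x-x_0|\ge\dist(x_0,T)\}$ to get $a_T^{-1}$. The only cosmetic difference is that you integrate out to $\infty$ rather than to $2h_T$, which gives the same bound without needing $T\subset B(x_0,2h_T)$.
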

\begin{proof}
We first consider any far-field triangle $T$ such that \(a_T\geq h_T\). In this case, for every \(x\in T\), using $|x-x_0|+\rho \ge \dist(x_0,T)+\rho = a_T$ and Lemma \ref{lem:Djpsi}, we obtain  
	\[
	\|D^3\psi \|_{L^1(T)} \lesssim \int_T\bigl(|x-x_0|+\rho\bigr)^{-3}\,dx
	\lesssim  h_T^2 a_T ^{-3}\lesssim\Theta_T.
	\]

We then consider any near-field triangle $T$ such that \(a_T < h_T\). 
%In this case, $h_{T}\eqsim h_{T_0}\eqsim\rho$. 
Since \(a_T < h_T\), we have $T\subset B(x_0,2h_T)  \backslash B(x_0,\dist(x_0,T))$. Integrating over the annulus and using Lemma~\ref{lem:Djpsi}, we obtain
	\[
    \begin{aligned}
        \|D^3\psi \|_{L^1(T)}
		&\lesssim \int_{ B(x_0,2h_T)  \backslash B(x_0,\dist(x_0,T))}\frac{1}{\bigl(|x-x_0|+\rho\bigr)^3}\,dx\\%\lesssim \int_{B_{ (C_0+1)h_T }(x_0) \backslash B_{ \dist(x_0,T) }(x_0)}\frac{1}{\bigl(|x-x_0|+\rho\bigr)^3}\,dx
        &\lesssim \int_{\text{dist}(x_0,T)}^{2h_T} \frac{r}{(r+\rho)^3} \dd r
        \lesssim \int_{\text{dist}(x_0,T)}^{2h_T} \frac{1}{(r+\rho)^2} \dd r\\
        &= -\frac{1}{2h_T+\rho}+\frac{1}{a_T}
		\lesssim \frac{1}{a_T }\lesssim \Theta_T.
    \end{aligned}
	\]
The proof is complete.  
\end{proof}

\subsection{Bounds of Edge Jumps}
We need to estimate the terms involving the truncated regularized Green's function on edges.

\begin{figure}[htbp]
    \centering
\begin{tikzpicture}[
scale=0.7, transform shape,
    font=\normalsize,
    triangle/.style={
        draw=black!80,
        fill=gray!10,
        line width=0.8pt
    },
    subtriangle/.style={
        draw=black!65,
        line width=0.55pt
    },
    edge/.style={
        draw=black,
        line width=1.1pt
    },
    projection/.style={
        draw=black!65,
        densely dashed,
        line width=0.65pt
    },
    partition/.style={
        draw=black!75,
        line width=0.55pt
    }
]

%==================================================
% (b) Far field
%==================================================
\begin{scope}

\coordinate (FL) at (-3.5,0);
\coordinate (FR) at ( 3.5,0);
\coordinate (FA) at ( 0,4.8);
\coordinate (Fy) at ( 0.3,0);
\coordinate (Fx) at ( 0.3,3.85);

% Triangle T.
\filldraw[triangle] (FL)--(FR)--(FA)--cycle;

% One subtriangle based on the entire edge F.
\filldraw[subtriangle,fill=bluetwo]
    (FL)--(FR)--(0,1.45)--cycle;

% Distinguished edge F.
\draw[edge] (FL)--(FR);

% Projection of x_0 onto F.
\draw[projection] (Fx)--(Fy);

% Right-angle marker at y_F.
\draw[line width=0.55pt]
    (0.44,0)--(0.44,0.14)--(0.30,0.14);

% Points x_0 and y_F.
\fill[sourcepoint] (Fx) circle (2.2pt);
\fill[black] (Fy) circle (1.5pt);

% Necessary labels.
\node[font=\Large] at (-0.66,2.55) {$T_F$};
\node at (0,-0.55) {$F$};
\node[anchor=west,xshift=-20pt,yshift=2pt] at (Fx) {$x_0$};
\node[anchor=west,font=\large] at (0.48,-0.23) {$y_F$};
\node[font=\Large] at (0.86,0.62) {$K_F$};

\end{scope}
%==================================================
% (a) Near field
%==================================================
\begin{scope}[xshift=8.4cm]

\coordinate (NL) at (-3.5,0);
\coordinate (NR) at ( 3.5,0);
\coordinate (NA) at ( 0,4.8);
\coordinate (Ny) at ( 0,0);
\coordinate (Nx) at ( 0,0.45);

% Triangle T.
\filldraw[triangle] (NL)--(NR)--(NA)--cycle;

% Dyadic subtriangles on the left of y_F.
\filldraw[subtriangle,fill=bluethree]
    (-3.5,0)--(-1.5,0)--(-2.5,0.44)--cycle;
\filldraw[subtriangle,fill=bluetwo]
    (-1.5,0)--(-0.5,0)--(-1.0,0.22)--cycle;
\filldraw[subtriangle,fill=blueone]
    (-0.5,0)--(0,0)--(-0.25,0.11)--cycle;

% Dyadic subtriangles on the right of y_F.
\filldraw[subtriangle,fill=blueone]
    (0,0)--(0.5,0)--(0.25,0.11)--cycle;
\filldraw[subtriangle,fill=bluetwo]
    (0.5,0)--(1.5,0)--(1.0,0.22)--cycle;
\filldraw[subtriangle,fill=bluethree]
    (1.5,0)--(3.5,0)--(2.5,0.44)--cycle;

% Distinguished edge F.
\draw[edge] (NL)--(NR);

% Dyadic partition points.
\foreach \x in {-3.5,-1.5,-0.5,0,0.5,1.5,3.5}
    \draw[partition] (\x,-0.07)--(\x,0.07);

% Projection of x_0 onto F.
\draw[projection] (Nx)--(Ny);

% Right-angle marker at y_F.
\draw[line width=0.55pt]
    (0.14,0)--(0.14,0.14)--(0,0.14);

% Points x_0 and y_F.
\fill[sourcepoint] (Nx) circle (2.2pt);
\fill[black] (Ny) circle (1.5pt);

% Necessary labels.
\node[font=\Large] at (0,2.25) {$T_F$};
\node at (0,-0.58) {$F$};
\node[anchor=west,xshift=-16pt,yshift=2pt] at (Nx) {$x_0$};
\node[anchor=east,font=\large] at (0.2,-0.23) {$y_F$};

% Representative interval labels.
\node[font=\small] at (0.25,-0.24) {$I_0$};
\node[font=\small] at (1.00,-0.24) {$I_1$};
\node[font=\small] at (2.50,-0.24) {$I_2$};

% Representative subtriangle labels.
\node[font=\small] at (0.32,0.25) {$K_0$};
\node[font=\small] at (1.00,0.39) {$K_1$};
\node[font=\small] at (2.50,0.64) {$K_2$};
\end{scope}

\end{tikzpicture}
\caption{Far-field (left) and near-field (right) subtriangle constructions.}
\label{fig:edge-subtriangles}
\end{figure}

\begin{lemma}\label{lem:partialnn_psi}
	Let \(F \in \mathcal E_h\) be a mesh edge. Then
	\begin{equation*}
		\inf_{c_F\in\mathbb R}
		\|\partial_{nn}\psi -c_F\|_{L^1(F)}
		\lesssim \Theta_F.
	\end{equation*}
\end{lemma}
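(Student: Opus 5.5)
We bound the oscillation of $\partial_{nn}\psi$ on $F$ by splitting into a far-field case $a_F\ge h_F$ and a near-field case $a_F<h_F$. The tools are the pointwise bounds $|D^j\psi(x)|\lesssim(|x-x_0|+\rho)^{-j}$ from Lemma~\ref{lem:Djpsi}, with $j=2,3$, and the fact that $|x-x_0|+\rho\gtrsim a_F$ on $F$. The figure suggests measuring the edge oscillation through $L^1$ norms of $D^3\psi$ on subtriangles $K\subset T_F$ resting on $F$, where $T_F$ is an element containing $F$. We use the scaled trace-type estimate
\[
\inf_{c\in\mathbb R}\|w-c\|_{L^1(I)}\lesssim |I|\,\|Dw\|_{L^1(I)},
\]
or its area version $\lesssim\|Dw\|_{L^1(K)}+\dots$, applied to $w=\partial_{nn}\psi$ on a segment $I$.

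\textbf{Far field ($a_F\ge h_F$).} Take $c_F=\partial_{nn}\psi(y_F)$, where $y_F$ is the projection of $x_0$ onto $F$ (or any point of $F$). By the mean value theorem along $F$, $|\partial_{nn}\psi(s)-c_F|\le h_F\sup_F|D^3\psi|\lesssim h_F a_F^{-3}$. Integrating over $F$ gives
\[
\|\partial_{nn}\psi-c_F\|_{L^1(F)}\lesssim h_F^2a_F^{-3}\eqsim \frac{h_F^2}{a_F(a_F+h_F)^2}=\Theta_F .
\]
One could equally route this through the subtriangle $K_F$ and Lemma~\ref{lem:D3psi_ThetaT}, but the direct pointwise route suffices.

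\textbf{Near field ($a_F<h_F$).} Here $\Theta_F\eqsim 1/a_F$, so we must show $\inf_c\|\partial_{nn}\psi-c\|_{L^1(F)}\lesssim a_F^{-1}$. A naive bound $\int_F|D^2\psi|$ gives only a logarithm of $h_F/a_F$, which is not enough. We therefore subtract the constant $c_F=\partial_{nn}\psi(y_F)$ and decompose $F$ into dyadic intervals around $y_F$,
\[
I_0=\{s:|s-y_F|\le a_F\},\qquad I_i=\{s:2^{i-1}a_F<|s-y_F|\le 2^i a_F\},\qquad i=1,\dots,N,
\]
with $2^Na_F\eqsim h_F$. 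For $s\in I_i$, integrate $D^3\psi$ along the edge from $y_F$ to $s$. Since points $t$ on $F$ satisfy $|t-x_0|+\rho\gtrsim |t-y_F|+a_F$, this gives
\[
|\partial_{nn}\psi(s)-c_F|\lesssim \int_0^{|s-y_F|}(r+a_F)^{-3}\,\dd r\lesssim a_F^{-2}.
\]
That bound is too crude on the far intervals. There we instead use $|\partial_{nn}\psi(s)|+|c_F|\lesssim(2^ia_F)^{-2}+a_F^{-2}$, but the $|c_F|$ term, multiplied by length $2^ia_F$, would blow up. So the better plan is to avoid a single constant. Write
\[
\|\partial_{nn}\psi-c_F\|_{L^1(F)}\le \sum_i \int_{I_i}|\partial_{nn}\psi(s)-c_F|\,\dd s ,
\]
and bound $|\partial_{nn}\psi(s)-c_F|\le \int_{y_F}^s|D^3\psi|\lesssim \int_0^{|s-y_F|}(r+a_F)^{-3}\,\dd r$. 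The key observation is that this integral is actually bounded by $\min\{|s-y_F|a_F^{-3},\,a_F^{-2}\}$. Integrating this in $s$ over $F$ gives $\lesssim a_F^{-2}h_F$, which is still too big by a factor $h_F/a_F$.

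Hence the main obstacle is that $\partial_{nn}\psi$ genuinely varies by $a_F^{-2}$ near $y_F$ and then stays of size up to $(|s|+a_F)^{-2}$. With any constant $c$, the integral $\int_F|\partial_{nn}\psi-c|$ is then about $\int_F (|s|+a_F)^{-2}\,\dd s+|c|h_F$. Choosing $c=0$ instead yields $\int_F(|s-y_F|+a_F)^{-2}\,\dd s\lesssim a_F^{-1}$, which is exactly what is needed. So in the near field we simply take $c_F=0$ and use Lemma~\ref{lem:Djpsi} with $j=2$:
\[
\|\partial_{nn}\psi\|_{L^1(F)}\lesssim \int_{\mathbb R}(|t|+a_F)^{-2}\,\dd t\lesssim a_F^{-1}\lesssim \Theta_F .
\]
The dyadic subtriangles $K_i$ of the figure can serve as an alternative way to make this rigorous if one prefers traces of $D^2\psi$ and $D^3\psi$ on $K_i$; on each $K_i$ the trace inequality gives $\|D^2\psi\|_{L^1(I_i)}\lesssim |I_i|^{-1}\|D^2\psi\|_{L^1(K_i)}+\|D^3\psi\|_{L^1(K_i)}\lesssim (2^ia_F)^{-1}$. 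Summing this geometric series again gives $a_F^{-1}$. The only delicate point is checking that $|x-x_0|+\rho\gtrsim |t-y_F|+a_F$ on $F$ and on each $K_i$, which follows from the geometry of the projection onto a line and the smallness of the heights of the $K_i$.
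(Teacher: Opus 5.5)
Your proof is correct. Its final form takes a more direct route than the paper's.

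\textbf{The paper's route.} The paper never evaluates derivatives of $\psi$ on $F$ itself. In both regimes it erects subtriangles of $T_F$ on $F$: a single $K_F$ in the far field, and dyadic $K_j$ over $I_j$ with heights at most $R_j/2$ in the near field. It checks that $|x-x_0|+\rho\gtrsim a_F$ (resp.\ $\gtrsim R_j$) on them. It then converts edge norms into volume norms by a scaled trace inequality, plus Poincar\'e in the far field. For example, $\|\partial_{nn}\psi\|_{L^1(I_j)}\lesssim R_j^{-1}\|D^2\psi\|_{L^1(K_j)}+\|D^3\psi\|_{L^1(K_j)}\lesssim R_j^{-1}$, followed by a geometric sum.

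\textbf{Your route.} You use that $\psi$ is smooth and that Lemma~\ref{lem:Djpsi} is a pointwise bound valid on $F$ itself.
In the far field, the mean value theorem along $F$ with $|D^3\psi|\lesssim a_F^{-3}$ there gives $h_F^2a_F^{-3}\eqsim\Theta_F$.
In the near field you take $c_F=0$, exactly as the paper does. The projection inequality $|t-x_0|\ge 2^{-1/2}\bigl(|t-y_F|+\dist(x_0,F)\bigr)$ for $t\in F$ follows from $(t-y_F)\cdot(x_0-y_F)\le0$. It reduces everything to $\int_{\mathbb R}(|t|+a_F)^{-2}\,\dd t\lesssim a_F^{-1}$.

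\textbf{What each buys.} Your version is shorter and needs no trace or Poincar\'e constants. It also avoids choosing the subtriangle heights small enough to secure the lower bound on $|x-x_0|+\rho$. The same computation with $j=3$ also yields Lemma~\ref{lem:explicit-shear-edge-M}. The paper's trace-based version uses only $L^1$ information on $D^2\psi$ and $D^3\psi$ near $F$. It would therefore survive if only integrated, rather than pointwise, derivative bounds were available. It also keeps the edge estimates parallel to the elementwise Lemma~\ref{lem:D3psi_ThetaT}.

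In a final write-up, drop the abandoned near-field attempt with $c_F=\partial_{nn}\psi(y_F)$ and state the projection inequality explicitly.
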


\begin{proof}
We first consider the far-field edge $F$ such that \(a_F\ge h_F\). We fix a triangle $T_F$ having $F$ as an edge. 
Let \(K_F\) be a subtriangle of $T_F$, as shown in Figure \ref{fig:edge-subtriangles}, such that \(\operatorname{diam}(K_F)\lesssim h_F \), with the height of \(K_F\) chosen sufficiently small, e.g., $h_F/2$.
We have \( |x-x_0|+\rho\gtrsim a_F\) for every \(x\in K_F\).
By a trace inequality and a Poincar\'e inequality,
\[
\inf_{c_F\in\mathbb R} \|\partial_{nn}\psi -c_F\|_{L^1(F)}
\lesssim \|D^3\psi \|_{L^1(K_F)}.
\]
It then follows from the same far-field analysis in Lemma \ref{lem:D3psi_ThetaT} that 
\[
\inf_{c_F\in\mathbb R} \|\partial_{nn}\psi -c_F\|_{L^1(F)}
\lesssim \|D^3\psi \|_{L^1(K_F)}
\lesssim  h_F^2a_F^{-3} \lesssim \Theta_F.
\]
We next consider the near-field edge $F$ such that \(a_F<h_F\). In this case, we set \(c_F=0\). Let $y_F$ be the Euclidean projection of $x_0$ onto $F$.
We will split $F$ into subintervals $I_0, I_1, \ldots, I_J$. 
Each interval $ I_j $ satisfies
    \[
    \begin{aligned}
        |I_j|\eqsim R_j &:=2^j a_F,\qquad  0\le j\le J-1,\\
        |I_J|\le R_J &:=2^J a_F.
    \end{aligned}
    \]

On each subinterval $I_j$, we construct a subtriangle $K_j$ of $T_F$ with height at most $R_j/2$. Figure \ref{fig:edge-subtriangles} shows this construction.
    Since \(R_J\le h_F\), we have \(J\le 1+\log(1+\frac{h_F}{a_F})\).
	For $j\ge 1$ and \(x\in K_j\), since $|y_F-x|\ge \sum_{i=0}^{j-1} R_i\ge R_j/2$, we have 
    \begin{equation}\label{eq:Kj}
        |x-x_0|+\rho\ge  R_j/2\quad\text{in }K_j.
    \end{equation} 
    Since the height of $K_0$ is at most $R_0/2=a_F/2$, the same conclusion also holds for $j=0$.
    Applying the scaled trace inequality on \(K_j\), we obtain
	\[
	\begin{aligned}
		\|\partial_{nn}\psi \|_{L^1(I_j)}
		\lesssim 
		R_j^{-1}\|D^2\psi \|_{L^1(K_j)}
		+\|D^3\psi \|_{L^1(K_j)},\qquad 0\le j\le J.
	\end{aligned}
	\]
Using the pointwise bounds for \(\psi \) in Lemma~\ref{lem:Djpsi} and \eqref{eq:Kj}, we further obtain
	\[
	\begin{aligned}
		\|\partial_{nn}\psi \|_{L^1(I_j)}
		\lesssim \left(R_j^{-1}R_j^{-2} +R_j^{-3} \right)|K_j|
		\lesssim R_j^{-1}.
	\end{aligned}
	\]
Summing the $L^1$ norms over the subintervals yields
	\[
	\|\partial_{nn}\psi \|_{L^1(F)}
	\lesssim \sum_{j=0}^J R_j^{-1}
	\lesssim \sum_{j=0}^J(2^ja_F)^{-1}
	\lesssim a_F^{-1}\lesssim\Theta_F.
	\]
The proof is complete. 
\end{proof}

\begin{lemma}\label{lem:explicit-shear-edge-M}
    Let \(F \in \mathcal E_h\) be a mesh edge. Then
	\begin{equation*}
    \|\partial_{n}\Delta\psi  +\partial_{t} \partial_{nt} \psi \|_{L^1(F)}
	\lesssim \Phi_F.
	\end{equation*}
\end{lemma}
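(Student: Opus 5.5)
The plan mirrors Lemma~\ref{lem:partialnn_psi}, with one more derivative and no constant to subtract. The quantity $\partial_n\Delta\psi+\partial_t\partial_{nt}\psi$ is a linear combination of third derivatives of $\psi$. Lemma~\ref{lem:Djpsi} therefore gives the pointwise bound
\[
|\partial_n\Delta\psi+\partial_t\partial_{nt}\psi|\lesssim(|x-x_0|+\rho)^{-3}.
\]
I will integrate this along $F$ directly rather than pass through a trace inequality. For $x\in F$ we have $|x-x_0|+\rho\ge \dist(x_0,F)+\rho=a_F$. Because $|x-x_0|\ge|x-y_F|$, where $y_F$ is the projection of $x_0$ onto $F$, we also have $|x-x_0|+\rho\gtrsim a_F+|x-y_F|$. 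Indeed $\max(\dist,|x-y_F|)\le|x-x_0|$, so $a_F+|x-y_F|\le 2(|x-x_0|+\rho)$.

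Parametrizing $F$ by the signed arclength $s=\pm|x-y_F|$, with $s$ ranging over a subinterval of $[-h_F,h_F]$, I expect
\[
\|\partial_n\Delta\psi+\partial_t\partial_{nt}\psi\|_{L^1(F)}\lesssim\int_{F}\frac{\dd s}{(a_F+|s|)^3}.
\]
I then split into the two regimes used before.

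In the far field, $a_F\ge h_F$, so the integrand is at most $a_F^{-3}$ and the integral is $\lesssim h_F a_F^{-3}$. Since $a_F+h_F\eqsim a_F$, the definition gives $\Phi_F=\frac{h_F(2a_F+h_F)}{2a_F^2(a_F+h_F)^2}\eqsim h_F a_F^{-3}$, which is the required bound.

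In the near field, $a_F<h_F$. Here I bound the integral by $2\int_0^{h_F}(a_F+s)^{-3}\dd s=a_F^{-2}-(a_F+h_F)^{-2}=2\Phi_F$. This is exactly why $\Phi_F$ is defined as a difference of inverse squares, and the same computation covers the far field as well. So a single computation suffices: the exact identity $\int_0^{h_F}(a_F+s)^{-3}\dd s=\Phi_F$ together with the fact that $F$ lies within arclength $h_F$ of $y_F$ on each side.

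The main point needing care is the validity of the pointwise bound on $F$ itself. Third derivatives of $\psi$ must be meaningful there, that is, $\psi$ must be smooth across $F$. This holds because $\psi$ is a smooth function on $B$ and vanishes outside $B$ by the cutoff construction. Lemma~\ref{lem:Djpsi} is stated for every $x\in B$, so restricting it to $F\cap B$ is legitimate, and $\psi$ vanishes on $F\setminus B$.

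A second point is the comparison $|x-x_0|+\rho\gtrsim a_F+|x-y_F|$ when $x_0$ lies close to the line of $F$ but $y_F$ is an endpoint of $F$. This is the case where the projection onto the segment differs from the projection onto the line. For convex sets the inequality $|x-x_0|\ge|x-y_F|$ still holds for all $x\in F$, since $y_F$ is the metric projection onto the convex set $F$ and hence the angle at $y_F$ is obtuse. The one-dimensional integral reduction is therefore valid in all cases.
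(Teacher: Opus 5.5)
Your proof is correct, and it is more direct than the paper's. The paper never evaluates $D^3\psi$ on $F$ itself. It reuses the subtriangle $K_F$ (far field, $a_F\ge h_F$) and the dyadic subtriangles $K_j$ over $I_j\subset F$ (near field, $a_F<h_F$) from the proof of Lemma~\ref{lem:partialnn_psi}. On each of these it applies the scaled trace inequality $\|\partial_n\Delta\psi+\partial_t\partial_{nt}\psi\|_{L^1(I)}\lesssim|I|^{-1}\|D^3\psi\|_{L^1(K)}+\|D^4\psi\|_{L^1(K)}$ and inserts Lemma~\ref{lem:Djpsi} with $j=3,4$. This gives $h_Fa_F^{-3}$ in the far field and $\sum_jR_j^{-2}\lesssim a_F^{-2}$ in the near field, and each is then compared with $\Phi_F$.

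You instead observe that $\psi$, extended by zero, lies in $C_0^\infty(B)$, so Lemma~\ref{lem:Djpsi} can be applied pointwise on the edge. The projection inequality $|x-x_0|\ge\max\{|x-y_F|,\dist(x_0,F)\}$ then reduces everything to $\int_F(a_F+|s|)^{-3}\dd s\le 2\Phi_F$.

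Your route needs only the case $j=3$ of Lemma~\ref{lem:Djpsi}, with no subtriangle geometry, no trace inequality and no case distinction. The identity $\int_0^{h_F}(a_F+s)^{-3}\dd s=\Phi_F$ also explains exactly why $\Phi_F$ is defined as a difference of inverse squares.

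The paper's route keeps the argument parallel to Lemma~\ref{lem:partialnn_psi}, where the far-field case subtracts a constant via trace plus Poincar\'e. It also works only with $L^1$ norms over two-dimensional subelements. It would therefore remain usable if one had only integrated, rather than pointwise, control of the derivatives of $\psi$ near the edge. Given the pointwise Lemma~\ref{lem:Djpsi}, however, that extra machinery is not needed here.
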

\begin{proof}
Let \(K=K_F\) or $K_j$ be the subtriangle with base \(I=F\) or $I_j$ as constructed in the proof of Lemma \ref{lem:partialnn_psi}. The scaled trace inequality then gives
\begin{equation}\label{eq:shear-scaled-trace}
		\|\partial_{n}\Delta\psi  +\partial_{t} \partial_{nt}\psi \|_{L^1(I)}
		\lesssim |I|^{-1}\|D^3\psi \|_{L^1(K)}
		+\|D^4\psi \|_{L^1(K)}.
	\end{equation}

We first consider the far-field edge $F$ with \(a_F\ge h_F\). Applying \eqref{eq:shear-scaled-trace} with \(K=K_F\) and Lemma~\ref{lem:Djpsi} yields
	\[
    \|\partial_{n}\Delta\psi  +\partial_{t} \partial_{nt}\psi \|_{L^1(F)}
		\lesssim \left( h_F^{-1}a_F^{-3} +a_F^{-4} \right)h_F^2
		\lesssim h_Fa_F^{-3}\lesssim\Phi_F.
	\]

For the near-field case \(a_F<h_F\), let \(F=\bigcup_{j=0}^{J}I_j\) be the dyadic partition introduced in the proof of Lemma~\ref{lem:partialnn_psi}. Applying the scaled trace estimate \eqref{eq:shear-scaled-trace} on each subtriangle \(K_j\) and proceeding as in the far-field case, we obtain
	\[
	\|\partial_{n}\Delta\psi  +\partial_{t} \partial_{nt}\psi \|_{L^1(I_j)}
	\lesssim \left( R_j^{-1}R_j^{-3}+R_j^{-4} \right)R_j^2
	\lesssim R_j^{-2}.
	\]
Summing the $L^1$ norms over the subintervals yields
	\[
	\|\partial_{n}\Delta\psi  +\partial_{t} \partial_{nt}\psi \|_{L^1(F)}
	\lesssim \sum_{j=0}^J \|\partial_{n}\Delta\psi  +\partial_{t} \partial_{nt}\psi \|_{L^1(I_j)}
	\lesssim a_F^{-2}\lesssim\Phi_F.
	\]
This completes the proof.
\end{proof}

\subsection{Proof of the Morley Result} Recall that the regularized Dirac function $\delta_{x_0,\rho}$ is given in \eqref{eq:delta_moments} with $s=3$. Using the same regularization technique as in Section \ref{sect:CR}, we obtain
\begin{equation}\label{eq:regularized_evaluation_Morley}
	\left|D_h^{\gamma}E(x_0)\right|
	\le \left|\bigl(E,D^{\gamma}\delta_{x_0,\rho}\bigr)_{T_0}\right|
	+C\rho^{1+\alpha}|U|_{C^{3,\alpha}(T_0)}.
\end{equation} 
We are now in a position to prove the main theoretical result for the Morley method. 

\begin{proof}[Proof of Theorem \ref{thm:main_Morley}]
For $\psi=\xi\mathfrak{g}$, the product rule implies that
\begin{align*}
    \Delta^2 \psi&=\xi \Delta^2 \mathfrak{g}+\mathcal{C},\\
    \mathcal C &:= \Delta^2\xi \mathfrak{g}+ 2\Delta\xi\,\Delta  \mathfrak{g}+ 4 D (\Delta\xi)\cdot D  \mathfrak{g} + 4 D \xi\cdot D (\Delta \mathfrak{g}) + 4 D^2 \xi :  D^2 \mathfrak{g}. %\nonumber%\label{eq:cutoff-commutator2}
\end{align*} 
Using the weak formulation of the regularized Green's function problem and the identity \eqref{eq:morley-residual_id}, for $|\gamma|=2$ we obtain
\begin{align*}
	(D_h^{\gamma}E,\delta_{x_0,\rho})&=(E,\Delta^2\psi )-(E,\mathcal C)\\
	&=b_h(E,\psi )-\sum_{T\in\mathcal T_h}\int_{\partial T}(D^2\psi \cdot n)\cdot D E\,\mathrm ds\\
    &\qquad+\sum_{T\in\mathcal T_h}\int_{\partial T}E\,\partial_{n}\Delta\psi \,\mathrm ds-(E,\mathcal C)\\
    &=\bigl(f,\psi -\Pi_{h}\psi \bigr)+\Bigl(\sum_{F\in\mathcal E_h}\int_F\llbracket\partial_{n}U_h\rrbracket\,\partial_{nn}\psi \,\dd s\\
    &\qquad-\sum_{F\in\mathcal E_h}\int_F\llbracket U_h\rrbracket \big(\partial_n\Delta\psi +\partial_t\partial_{nt}\psi\big)\,\mathrm ds\Bigr)-(E,\mathcal C)\\
	&=:I_1+I_2+I_3.
	\label{eq:morley-error-split-m}
\end{align*}  
Since \(\operatorname{supp}\psi \subset B\), only elements intersecting \(\omega_{d}\) contribute to \(I_1\). By the interpolation error estimate and Lemmas \ref{lem:D3psi_ThetaT} and \ref{lem:sum-weights}, we have
\begin{align*}
	|I_1|&\le\sum_{T\cap B\ne\emptyset}\|f\|_{L^\infty(T)}\|\psi -\Pi_{h}\psi \|_{L^1(T)}\nonumber\\
	&\lesssim \sum_{T\cap B\ne\emptyset}h_T^3 \|f\|_{L^\infty(T)}\|D^3\psi \|_{L^1(T)}\nonumber\\
	&\lesssim \sum_{T\cap B\ne\emptyset}h_T^2\|f\|_{L^\infty(T)}
	\cdot h_T\Theta_T\\
    &\lesssim L_{\rho,d}\max_{T\cap B\ne\emptyset}h_T^2\|f\|_{L^\infty(T)}.
\end{align*}
As in the CR analysis in the proof of Theorem \ref{thm:main_CR}, the normal-derivative jump of $U_h$ has zero mean on each edge and is orthogonal to edgewise constants.
Therefore, using Lemmas \ref{lem:partialnn_psi}, \ref{lem:explicit-shear-edge-M}, and \ref{lem:sum-weights} yields
\begin{align*}
|I_2|&\le\sum_{F\in\mathcal E_h(B)}\|\llbracket\partial_{n}U_h\rrbracket\|_{L^\infty(F)}\inf_{c_F\in\mathbb R}\|\partial_{nn}\psi -c_F\|_{L^1(F)}\\
	&\quad+\sum_{F\in\mathcal E_h(B)}\|\llbracket U_h\rrbracket\|_{L^\infty(F)}\|\partial_n\Delta\psi +\partial_t\partial_{nt}\psi\|_{L^1(F)}\\
	&\lesssim \sum_{F\in\mathcal E_h(B)}\|\llbracket\partial_{n}U_h\rrbracket\|_{L^\infty(F)}
	\Theta_F+\sum_{F\in\mathcal E_h(B)}\|\llbracket U_h\rrbracket\|_{L^\infty(F)}\Phi_F\\
    &\lesssim L_{\rho,d}\left(
    \max_{F\in\mathcal E_h(B)}h_F^{-1}\|\llbracket\partial_{n}U_h\rrbracket\|_{L^\infty(F)}
    +\max_{F\in\mathcal E_h(B)}h_F^{-2}\|\llbracket U_h\rrbracket\|_{L^\infty(F)}
    \right).
\end{align*}
Here we use the fact that  $\Theta_F\eqsim \Theta_T$ and $\Phi_F\eqsim \Phi_T$ for $F\subset\partial T$.

For the lower-order term $I_3=(E,\mathcal C)$, let $k\ge0$, $1\le p\le\infty$, and let $q$ be the conjugate exponent of $p$. Since $\mathcal C\in C_0^\infty(B)$, the definition of the negative norm gives
\begin{equation*}
|I_3|\le
\|E\|_{W^{-k,p}(\omega_d)}
\|\mathcal C\|_{W^{k,q}(\omega_d)}.
\end{equation*}
Let $A_\xi:=\{x\in B:c_1d\le |x-x_0|\le c_2d\}$.
Then $\operatorname{supp}\mathcal C\subset A_\xi$ and $|A_\xi|\lesssim d^2$. On this transition annulus, Lemma~\ref{lem:pointwise-green-estimates-displacement} and \eqref{eq:cutoff-bounds} give
\begin{equation*}
|D^j\mathfrak g|\lesssim d^{-j},\qquad
|D^j\xi|\lesssim d^{-j},\qquad j\ge0.
\end{equation*}
The Leibniz rule therefore yields
\begin{equation*}
\|D^\beta\mathcal C\|_{L^\infty(A_\xi)}
\lesssim d^{-4-|\beta|},
\qquad |\beta|\le k.
\end{equation*}
For $1\le q<\infty$, multiplication by $|A_\xi|^{1/q}\lesssim d^{2/q}$ gives the corresponding $L^q$ bound; for $q=\infty$, the pointwise bound applies directly. Since $d\le\operatorname{diam}(\Omega)$ and $\Omega$ is fixed, we obtain
\begin{equation*}
\begin{aligned}
\|\mathcal C\|_{W^{k,q}(\omega_d)}
&\lesssim
\left(\sum_{|\beta|\le k}
d^{q(-4-|\beta|+2/q)}\right)^{1/q}
\lesssim d^{-4-k+2/q}, &&1\le q<\infty,\\
\|\mathcal C\|_{W^{k,\infty}(\omega_d)}
&\lesssim \max_{|\beta|\le k}d^{-4-|\beta|}
\lesssim d^{-4-k}, &&q=\infty.
\end{aligned}
\end{equation*}
In either case, $-4-k+2/q=-2-k-2/p$, and hence
\begin{equation*}
|I_3|\lesssim d^{-2-k-2/p}
\|U-U_h\|_{W^{-k,p}(\omega_d)}.
\end{equation*}

Combining the preceding bounds for \(I_1\)--\(I_3\) and \eqref{eq:regularized_evaluation_Morley} yields
	\begin{equation*}
	\begin{aligned}
        |D_h^2E(x_0)|\lesssim  L_{\rho,d} \max_{T\cap \omega_{d}\ne\emptyset} \zeta(T)+d^{-2-k-2/p}\|U-U_h\|_{W^{-k,p}(\omega_{d})}+\rho^{1+\alpha}|U|_{C^{3,\alpha}(\omega_{d})}.
	\end{aligned}
	\end{equation*}
Since \(x_0\) is arbitrary, the a posteriori error estimate in Theorem~\ref{thm:main_Morley} follows. 
\end{proof}

With the help of the bubble functions used in \citep{BeiraoNiiranenStenberg2007}, it is straightforward to show that $\zeta(T)$ provides a lower bound for the local $W^{2,\infty}$ error of the Morley method.
\begin{theorem}\label{thm:morley-local-efficiency}
	For every \(K\in\mathcal T_h\), let \(f_K\in\mathbb R\) be an arbitrary constant approximation of \(f\) on \(K\). Then, for every \(T\in\mathcal T_h\),
    \begin{equation*}
        \zeta(T)\lesssim \|D^2_h(U-U_h)\|_{L^\infty(\Omega_{T};\mathcal T_h)}+\max_{K\subset\Omega_{T}}h_K^{2}\|f-f_K\|_{L^\infty(K)}.
    \end{equation*}
\end{theorem}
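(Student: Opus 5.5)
The plan is a standard bubble-function argument, in the spirit of Verfürth's residual method adapted to $L^\infty$ as in \citep{BeiraoNiiranenStenberg2007,DariDuranPadra2000}, applied separately to the three terms of $\zeta(T)$. Write $E=U-U_h$ and $R_K:=f_K$, noting that $\Delta^2 U_h|_K=0$ since $U_h|_K\in\mathbb P_2(K)$. The pairing of each bubble with $E$ comes from integrating by parts elementwise. Every such pairing is then bounded by $\|D_h^2E\|_{L^\infty}$ times the $L^1$ norm of the Hessian of the bubble. Normalizing the bubbles in $L^1$ rather than $L^2$ converts the usual $L^2$ efficiency into the $L^\infty$ version.

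\emph{Volume term.} Take $b_K$, the quartic element bubble, squared so that $b_K^2\in H_0^2(K)$, and set $v=f_K b_K^2$. Since $v$ is $H^2_0$ on $K$ with vanishing traces and normal derivatives, integration by parts gives
\[
f_K\int_K b_K^2=\int_K (f-f_K)b_K^2\dd x-\int_K D^2E:D^2 v\dd x .
\]
Equivalence of norms on polynomials gives $\int_K b_K^2\eqsim |K|$ and $\|D^2v\|_{L^1(K)}\lesssim |f_K|h_K^{-2}|K|$. Dividing by $|K|$ yields
\[
h_K^2|f_K|\lesssim \|D^2E\|_{L^\infty(K)}+h_K^2\|f-f_K\|_{L^\infty(K)},
\]
and the triangle inequality bounds $h_K^2\|f\|_{L^\infty(K)}$.

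\emph{Jump terms.} For an interior edge $F\subset\partial T$, $\llbracket\partial_nU_h\rrbracket$ is a linear polynomial on $F$ with zero mean, and $\llbracket U_h\rrbracket$ is quadratic and vanishes at both endpoints. The direct route, which I expect to be simplest, avoids bubbles altogether. Both jumps are determined by $\llbracket D_hU_h\rrbracket$. Because $U$ is $C^1$ across $F$ (we assume the regularity needed for the local norm to make sense), we have $\llbracket D_h U_h\rrbracket=-\llbracket D_hE\rrbracket$. The function $\llbracket \partial_n U_h\rrbracket$ is affine with zero mean on $F$, so its $L^\infty$ norm is at most $h_F$ times the tangential derivative $\partial_t\llbracket\partial_n U_h\rrbracket=-\llbracket\partial_{nt}E\rrbracket$, which is bounded by $2\|D_h^2E\|_{L^\infty(\Omega_F)}$. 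Similarly, $\llbracket U_h\rrbracket$ vanishes at both endpoints of $F$. Its second tangential derivative equals $-\llbracket\partial_{tt}E\rrbracket$, and the one-dimensional interpolation estimate then gives $\|\llbracket U_h\rrbracket\|_{L^\infty(F)}\lesssim h_F^2\|D_h^2E\|_{L^\infty(\Omega_F)}$. Boundary edges are handled identically, using the boundary conditions of $U$ and the Morley boundary degrees of freedom.

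This pointwise route is purely local and needs no $f$ at all. Its only delicate point, and the main obstacle, is justifying that jumps of $D_hE$ along $F$ are controlled by tangential derivatives of $D_h^2E$. This rests on the zero-mean and endpoint-vanishing properties, so the correct polynomial facts must be checked for each jump. The oscillation term then appears only through the volume term. Taking the maximum over the edges of $T$ and combining with the volume estimate on $K=T$ gives the asserted bound over $\Omega_T$.
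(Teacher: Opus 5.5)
Your proof is correct, but for the jump terms it takes a different route from the one the paper indicates. The paper gives no details beyond saying that the bound follows from the bubble functions of \citep{BeiraoNiiranenStenberg2007}, i.e.\ Verf\"urth-type testing with element and edge bubbles for all three contributions to $\zeta(T)$. You use the element bubble only for $h_T^2\|f\|_{L^\infty(T)}$ and bound the jumps pointwise from the polynomial structure of the Morley jumps.

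The delicate point you flag needs no extra regularity. If the right-hand side is finite, then $D^2U\in L^\infty(\Omega_T)$, so $DU$ is continuous across $F$. Moreover, $DE|_K$ is Lipschitz on each convex $\overline K\subset\Omega_T$ with constant $\lesssim\|D^2E\|_{L^\infty(K)}$. Let $m$ be the midpoint of $F$, where the zero-mean affine function $\llbracket\partial_nU_h\rrbracket$ vanishes. Then $|\llbracket\partial_nU_h\rrbracket(x)|=|\llbracket\partial_nE\rrbracket(x)-\llbracket\partial_nE\rrbracket(m)|\lesssim h_F\|D_h^2E\|_{L^\infty(\Omega_F;\mathcal T_h)}$. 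Likewise, $\llbracket E\rrbracket=-\llbracket U_h\rrbracket$ is a quadratic vanishing at the endpoints of $F$, with second tangential derivative bounded by the Hessians on the two sides. Hence $\|\llbracket U_h\rrbracket\|_{L^\infty(F)}\lesssim h_F^2\|D_h^2E\|_{L^\infty(\Omega_F;\mathcal T_h)}$.

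Your route therefore uses only the two elements sharing $F$ and produces no oscillation for the jump terms, so only $K=T$ is needed in the oscillation maximum. The bubble route is the machinery that also works for $L^2$- or energy-type norms, where no Lipschitz control of $D_hE$ is available. In this $L^\infty$-Hessian setting, however, it is heavier than necessary.

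One slip in the volume step needs fixing. As written, your identity has the signs reversed and uses $v=f_Kb_K^2$ in the Hessian term; read literally, it does not give the claimed bound after dividing by $|K|$. Test with $b_K^2$ itself. Since $(f,b_K^2)=(D^2U,D^2b_K^2)$, and $\int_K D^2U_h:D^2(b_K^2)\dd x=0$ because $D^2U_h|_K$ is constant and $b_K^2\in H_0^2(K)$, one gets $f_K\int_Kb_K^2\dd x=\int_KD^2E:D^2(b_K^2)\dd x-\int_K(f-f_K)b_K^2\dd x$. The rest of your argument then goes through, and the cubic bubble $b_K=\lambda_1\lambda_2\lambda_3$ suffices.
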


\section{Numerical Experiments}\label{sect:NE}
To illustrate the effectiveness of the proposed localized a posteriori error estimates, we present adaptive computations using the CR and Morley methods on the L-shaped domain \(\Omega=(-1,1)^2\setminus ([0,1]\times[-1,0])\).
The target region \(\omega\) and localization radius \(d\) are chosen so that \(\omega_d\) excludes the reentrant corner. Because the exact solutions are unavailable, we evaluate the numerical errors using reference solutions $u_{\rm ref}$ and $U_{\rm ref}$ computed on sufficiently fine meshes. Local mesh refinement is performed by newest-vertex bisection.

\subsection{Control of Pollution Terms}\label{subsect:pollution}
The negative-norm terms $d^{-1-k-2/p}\|u-u_h\|_{W^{-k,p}(\omega_d)}$ and $d^{-2-k-2/p}\|U-U_h\|_{W^{-k,p}(\omega_d)}$ in Theorems~\ref{thm:main_CR} and \ref{thm:main_Morley} are measured in weaker norms and are therefore of higher order. In the computations, we bound these local negative norms by the corresponding global norms $\|u-u_h\|_{W^{-k,p}(\Omega)}$ and $\|U-U_h\|_{W^{-k,p}(\Omega)}$. We take \(d=O(1)\), while retaining flexibility in the choice of the indices $k$ and $p$.

For the CR method, we set $k=0$ and choose either $p=2$ or $p=\infty$ in Theorem \ref{thm:main_CR} and obtain
\begin{equation*}
		\begin{aligned}
				\|D_{h}(u-u_h)\|_{L^\infty(\omega;\mathcal T_h)}
				&\lesssim L_{\underline{h}_\omega,d}\max_{T\cap \omega_{d}\ne\emptyset}\eta_1(T)
				+d^{-1-2/p}\|u-u_h\|_{L^p(\omega_{d})}\\
				&\quad+\max_{T\cap \omega_{d}\ne\emptyset}h_T^{1+\alpha}|  u|_{C^{2,\alpha}(\omega_{d})}.
				\end{aligned}
		\end{equation*}
To control the pollution term $\|u-u_h\|_{L^p(\omega_{d})}$, one may either use an a posteriori estimate for the $L^2$ error $\|u-u_h\|_{L^2(\Omega)}$ or the $L^\infty$ error $\|u-u_h\|_{L^\infty(\Omega)}$.
In the first numerical experiment, we use the following global $L^\infty$ error estimate from \citep{DariDuranPadra2000}:
\[
\|u-u_h\|_{L^\infty(\Omega)}\lesssim\big|\log \max_{T\in\mathcal{T}_h}h_T\big|^{4/3}\max_{T\in\mathcal{T}_h}\eta_0(T).
\]
Alternatively, $\|u-u_h\|_{L^2(\Omega)}$ can be bounded by the $L^2$ error estimator from \citep{CarstensenBartelsJansche2002}:
\[
\tilde{\eta}_0(T)=h_T^2 \|f\|_{L^2(T)} + \sum_{F \in \partial T \cap \mathcal{E}_h^\circ} h_F^{3/2} \|  \llbracket \partial_n u_h\rrbracket\|_{L^2(F)} + \sum_{F \in \partial T} h_F^{3/2} \| \llbracket\partial_t u_h\rrbracket \|_{L^2(F)}.
\]

For the Morley method, we set $k=0$ in Theorem \ref{thm:main_Morley}. Because an a posteriori estimate for $\|U-U_h\|_{L^\infty(\Omega)}$ is not available in the literature, we choose $p=2$ in Theorem \ref{thm:main_Morley} and obtain
\begin{equation*}
\|D_h^2(U-U_h)\|_{L^\infty(\omega;\mathcal T_h)}\lesssim  L_{\underline h_{\omega},d} \max_{T\cap \omega_{d}\ne\emptyset} \zeta(T)+d^{-3}\|U-U_h\|_{L^2(\omega_{d})}+\max_{T\cap \omega_{d}\ne\emptyset} h_T^{1+\alpha}|U|_{C^{3,\alpha}(\omega_{d})}.
\end{equation*}
The pollution term $\|U-U_h\|_{L^2(\omega_{d})}$ is controlled by
\[
\|U-U_h\|_{L^2(\omega_d)}\lesssim\|D_h(U-U_h)\|_{L^2(\Omega)}\lesssim\Big(\sum_{T\in\mathcal{T}_h}\tilde{\zeta}_{1}(T)^2\Big)^\frac{1}{2},
\]
where $\tilde{\zeta}_{1}(T)$ is the broken $H^1$ error estimator
\[
\tilde{\zeta}_1(T) = h_T^3 \|f\|_{L^2(T)} + \sum_{F \in \partial T} h_F^{-1/2} \| \llbracket U_h \rrbracket \|_{L^2(F)} + \sum_{F \in \partial T} h_F^{1/2} \| \llbracket \partial_n U_h \rrbracket \|_{L^2(F)}.
\] 
The broken $H^1$-norm error indicator $\tilde{\zeta}_1(T)$ is obtained from combining the energy-norm estimator in \citep{BeiraoNiiranenStenberg2007} with a duality argument.

\subsection{Poisson Problem}\label{subsec:numerical-poisson}
First, we consider the Poisson equation
\[
	\begin{aligned}
		-\Delta u&=f\quad\text{in }\Omega,\\
		u&=0\quad\text{on }\partial\Omega.
	\end{aligned}
\]
The right-hand side is chosen as
\[
f=10\exp\left(-\frac{(x+0.5)^2+(y-0.3)^2}{0.005}\right)+\cos(\pi x)\cos(\pi y).
\]
The reentrant corner produces a singularity in the solution, and the gradient \(D u\) is unbounded. Therefore, a global \(L^\infty\) estimate for the gradient error is not expected. Instead, we measure the error in the target region \(\omega\).
We choose
\[
\omega=B((-0.25,0.25),0.1),\qquad d=0.1.
\]

Based on the discussion in Section \ref{subsect:pollution}, we use the error indicator
\[
\eta(T)=\left\{\begin{aligned}
    \eta_1(T) + \eta_0(T),\qquad T\cap\omega_d\neq\emptyset,\\
     \eta_0(T),\qquad T\cap\omega_d=\emptyset.
\end{aligned}\right.
\]
In the $\texttt{Solve}\rightarrow\texttt{Estimate}\rightarrow\texttt{Mark}\rightarrow\texttt{Refine}$ adaptive loop, we mark all elements satisfying
\[
\eta(T)\geq 0.6\max_{K\in\mathcal{T}_h}\eta(K).
\]

The log--log plot records $\|D_hu_{\rm ref}-D_hu_h\|_{L^\infty(\omega;\mathcal T_h)}$ and $\|u_{\rm ref}-u_h\|_{L^\infty(\omega;\mathcal T_h)}$, together with the local gradient estimator $\eta_1(\omega_d)=\max_{T\cap\omega_d\neq\emptyset}\eta_1(T)$. Figure \ref{fig:CR} shows that the adaptive method driven by the localized estimator effectively controls the gradient error in $\omega$ by refining the mesh locally in $\omega_d$.
\begin{figure}[htbp]
	\centering
	\begin{minipage}[b]{0.54\textwidth}
		\centering
		\includegraphics[width=\textwidth]{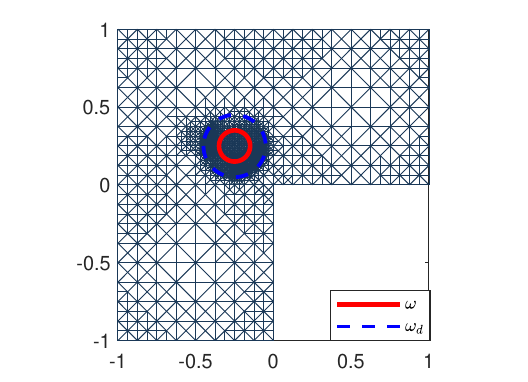}
	\end{minipage}
	\hfill
	\begin{minipage}[b]{0.44\textwidth}
		\centering
		\includegraphics[width=\textwidth]{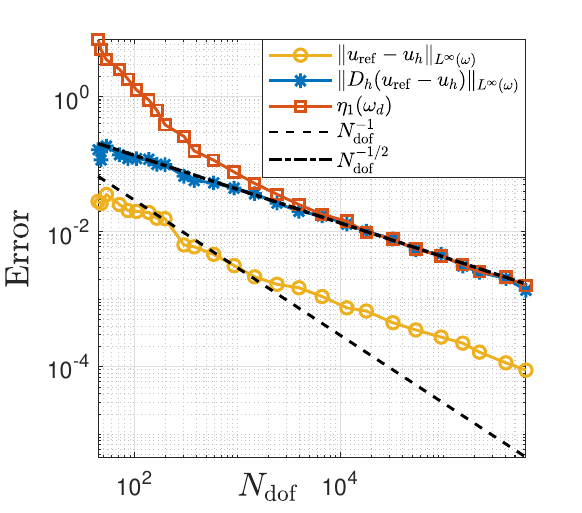}
	\end{minipage}
	\caption{Adaptive mesh (left) and error curves (right) for the CR method.}
	\label{fig:CR}
\end{figure}

\subsection{Biharmonic Problem}\label{subsec:numerical-biharmonic}
Next, we consider the biharmonic equation
	\[
	\begin{aligned}
		\Delta^2 U&=f\quad\text{in }\Omega,\\
		U=\partial_n U&=0\quad\text{on }\partial\Omega.
	\end{aligned}
	\]
The load function \(f\) is the same as in the Poisson case.
The reentrant corner produces a singularity for which the global pointwise norm of \(D^2U\) is unbounded. We therefore focus on the $W^{2,\infty}$ error in the local region
\[
\omega=B((-0.5,-0.2),0.15),\qquad d=0.15.
\]
As discussed in Section \ref{subsect:pollution}, we use $\tilde{\zeta}_1$ to control the pollution error $\|U-U_h\|_{L^2(\Omega)}$. Because $\zeta(T)$ enters through a maximum whereas $\tilde{\zeta}_1(T)$ enters through an $\ell^2$ sum, we use separate marking criteria. We first mark all elements satisfying
\[
\zeta(T)\geq 0.6\max_{K\in\mathcal{T}_h}\zeta(K)\quad\text{ and }\quad T\cap\omega_d\neq\emptyset.
\] 
We then augment the marked set by a minimal set $\mathcal{M}\subset\mathcal{T}_h$ chosen according to the Dörfler criterion:
\[
\sum_{T\in\mathcal M}\tilde{\zeta}_1(T)^2\geq0.6\sum_{T\in\mathcal T_h}\tilde{\zeta}_1(T)^2.
\]

We compare \(\|D_h^2(U_{\rm ref}-U_h)\|_{L^\infty(\omega;\mathcal T_h)}\) and \(\|U_{\rm ref}-U_h\|_{L^\infty(\omega;\mathcal T_h)}\) with the local Hessian estimator in a log--log plot. Figure \ref{fig:Morley} shows that the adaptive method driven by this estimator selectively refines the mesh in $\omega_d$ and effectively reduces the Hessian error in $\omega$.
\begin{figure}[htbp]
	\centering
	\begin{minipage}[b]{0.47\textwidth}
		\centering
		\includegraphics[width=\textwidth]{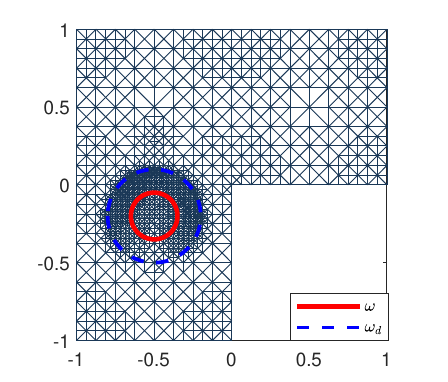}		
	\end{minipage}
	\hfill
	\begin{minipage}[b]{0.44\textwidth}
		\centering
		\includegraphics[width=\textwidth]{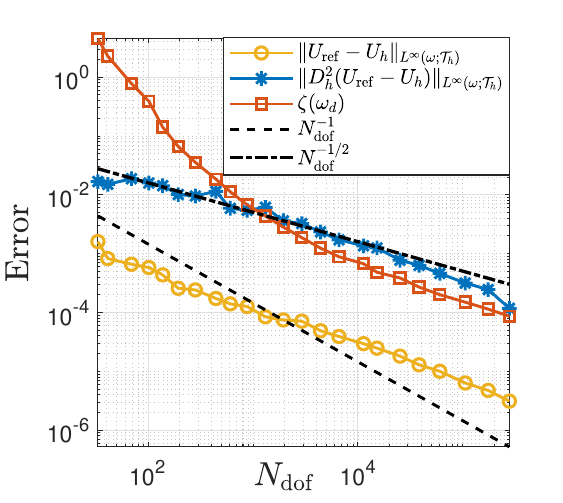}		
	\end{minipage}
	\caption{Adaptive mesh (left) and error curves (right) for the Morley method.}
	\label{fig:Morley}
\end{figure}

\section{Concluding Remarks}\label{sect:conclusion} We have developed localized pointwise a posteriori error estimates for the CR approximation of Poisson's equation and the Morley approximation of the biharmonic equation. Both estimates control nonconforming finite element errors in a target region using local error indicators and higher-order pollution terms. The numerical experiments support the effectiveness of the proposed localized indicators and the corresponding adaptive algorithms.

For the Morley method, the present localized estimator controls the Hessian error. The analysis does not yet provide analogous local or global a posteriori estimates for the function-value error $\|U-U_h\|_{L^\infty}$ or the gradient error $\|D_h(U-U_h)\|_{L^\infty}$. Establishing pointwise a posteriori estimates for these quantities remains a topic for future work.

\section*{Declaration}
During the preparation of this work, the authors used ChatGPT for polishing the writing of the text, generating code, and conducting computational experiments. The authors reviewed and edited the output as needed. The authors assume responsibility for all content.

\section*{Funding}
This work was supported by the National Natural Science Foundation of China under grant 12471346.


\begin{thebibliography}{33}
\providecommand{\natexlab}[1]{#1}
\providecommand{\url}[1]{\texttt{#1}}
\expandafter\ifx\csname urlstyle\endcsname\relax
  \providecommand{\doi}[1]{doi: #1}\else
  \providecommand{\doi}{doi: \begingroup \urlstyle{rm}\Url}\fi

\bibitem[Bank and Li(2019)]{BankLi2019}
Randolph~E. Bank and Yuwen Li.
\newblock Superconvergent recovery of {R}aviart-{T}homas mixed finite elements
  on triangular grids.
\newblock \emph{J. Sci. Comput.}, 81\penalty0 (3):\penalty0 1882--1905, 2019.
\newblock ISSN 0885-7474.
\newblock \doi{10.1007/s10915-019-01068-0}.

\bibitem[Becker et~al.(2010)Becker, Mao, and Shi]{becker2010convergent}
Roland Becker, Shipeng Mao, and Zhongci Shi.
\newblock A convergent nonconforming adaptive finite element method with
  quasi-optimal complexity.
\newblock \emph{SIAM J. Numer. Anal.}, 47\penalty0 (6):\penalty0 4639--4659,
  2010.
\newblock ISSN 0036-1429,1095-7170.
\newblock \doi{10.1137/070701479}.

\bibitem[Beir\~{a}o~da Veiga et~al.(2007)Beir\~{a}o~da Veiga, Niiranen, and
  Stenberg]{BeiraoNiiranenStenberg2007}
L.~Beir\~{a}o~da Veiga, J.~Niiranen, and R.~Stenberg.
\newblock A posteriori error estimates for the {M}orley plate bending element.
\newblock \emph{Numer. Math.}, 106\penalty0 (2):\penalty0 165--179, 2007.
\newblock \doi{10.1007/s00211-007-0066-1}.

\bibitem[Brenner and Scott(2008)]{BrennerScott2008}
Susanne~C. Brenner and L.~Ridgway Scott.
\newblock \emph{The mathematical theory of finite element methods}, volume~15
  of \emph{Texts in Applied Mathematics}.
\newblock Springer, New York, 3 edition, 2008.
\newblock \doi{10.1007/978-0-387-75934-0}.

\bibitem[Cai et~al.(2017)Cai, He, and Zhang]{CaiHeZhang2017}
Zhiqiang Cai, Cuiyu He, and Shun Zhang.
\newblock Residual-based a posteriori error estimate for interface problems:
  nonconforming linear elements.
\newblock \emph{Math. Comp.}, 86\penalty0 (304):\penalty0 617--636, 2017.
\newblock ISSN 0025-5718,1088-6842.
\newblock \doi{10.1090/mcom/3151}.

\bibitem[Carstensen et~al.(2002)Carstensen, Bartels, and
  Jansche]{CarstensenBartelsJansche2002}
Carsten Carstensen, S\"oren Bartels, and Stefan Jansche.
\newblock A posteriori error estimates for nonconforming finite element
  methods.
\newblock \emph{Numer. Math.}, 92\penalty0 (2):\penalty0 233--256, 2002.
\newblock ISSN 0029-599X,0945-3245.

\bibitem[Carstensen et~al.(2013)Carstensen, Gallistl, and Hu]{MR3054354}
Carsten Carstensen, Dietmar Gallistl, and Jun Hu.
\newblock A posteriori error estimates for nonconforming finite element methods
  for fourth-order problems on rectangles.
\newblock \emph{Numer. Math.}, 124\penalty0 (2):\penalty0 309--335, 2013.
\newblock ISSN 0029-599X,0945-3245.
\newblock \doi{10.1007/s00211-012-0513-5}.

\bibitem[Ciarlet(1978)]{Ciarlet1978}
Philippe~G. Ciarlet.
\newblock \emph{The finite element method for elliptic problems}.
\newblock North-Holland Publishing Co., Amsterdam--New York--Oxford, 1978.
\newblock ISBN 0-444-85028-7.
\newblock Studies in Mathematics and its Applications, Vol. 4.

\bibitem[Dall'Acqua and Sweers(2004)]{dall2004estimates}
Anna Dall'Acqua and Guido Sweers.
\newblock Estimates for {G}reen function and {P}oisson kernels of higher-order
  {D}irichlet boundary value problems.
\newblock \emph{J. Differential Equations}, 205\penalty0 (2):\penalty0
  466--487, 2004.
\newblock ISSN 0022-0396,1090-2732.
\newblock \doi{10.1016/j.jde.2004.06.004}.

\bibitem[Dari et~al.(2000)Dari, Dur\'an, and Padra]{DariDuranPadra2000}
E.~Dari, R.~G. Dur\'an, and C.~Padra.
\newblock Maximum norm error estimators for three-dimensional elliptic
  problems.
\newblock \emph{SIAM J. Numer. Anal.}, 37\penalty0 (2):\penalty0 683--700,
  2000.
\newblock ISSN 0036-1429,1095-7170.
\newblock \doi{10.1137/S0036142998340253}.

\bibitem[Demlow et~al.(2012)Demlow, Leykekhman, Schatz, and
  Wahlbin]{DemlowLeykekhmanSchatzWahlbin2012}
A.~Demlow, D.~Leykekhman, A.~H. Schatz, and L.~B. Wahlbin.
\newblock Best approximation property in the ${W}^1_\infty$ norm for finite
  element methods on graded meshes.
\newblock \emph{Math. Comp.}, 81\penalty0 (278):\penalty0 743--764, 2012.

\bibitem[Demlow(2007)]{Demlow2007}
Alan Demlow.
\newblock Local a posteriori estimates for pointwise gradient errors in finite
  element methods for elliptic problems.
\newblock \emph{Math. Comp.}, 76\penalty0 (257):\penalty0 19--42, 2007.
\newblock ISSN 0025-5718,1088-6842.
\newblock \doi{10.1090/S0025-5718-06-01879-5}.

\bibitem[Demlow(2010)]{Demlow2010}
Alan Demlow.
\newblock Convergence of an adaptive finite element method for controlling
  local energy errors.
\newblock \emph{SIAM J. Numer. Anal.}, 48\penalty0 (2):\penalty0 470--497,
  2010.

\bibitem[Demlow and Larsson(2013)]{DemlowLarsson2013}
Alan Demlow and Stig Larsson.
\newblock Local pointwise a posteriori gradient error bounds for the {S}tokes
  equations.
\newblock \emph{Math. Comp.}, 82\penalty0 (282):\penalty0 625--649, 2013.
\newblock ISSN 0025-5718,1088-6842.
\newblock \doi{10.1090/S0025-5718-2012-02647-0}.

\bibitem[Demlow and Makridakis(2010)]{DemlowMakridakis2010}
Alan Demlow and Charalambos Makridakis.
\newblock Sharply local pointwise a posteriori error estimates for parabolic
  problems.
\newblock \emph{Math. Comp.}, 79\penalty0 (271):\penalty0 1233--1262, 2010.
\newblock ISSN 0025-5718,1088-6842.
\newblock \doi{10.1090/S0025-5718-10-02346-X}.

\bibitem[Diening et~al.(2024)Diening, Rolfes, and
  Salgado]{DieningRolfesSalgado2024}
Lars Diening, Julian Rolfes, and Abner~J. Salgado.
\newblock Pointwise gradient estimate of the {R}itz projection.
\newblock \emph{SIAM J. Numer. Anal.}, 62\penalty0 (3):\penalty0 1212--1225,
  2024.
\newblock \doi{10.1137/23M1571800}.

\bibitem[Gastaldi and Nochetto(1987)]{GastaldiNochetto1987}
Lucia Gastaldi and Ricardo~H. Nochetto.
\newblock Optimal ${L}^\infty$-error estimates for nonconforming and mixed
  finite element methods of lowest order.
\newblock \emph{Numer. Math.}, 50:\penalty0 587--611, 1987.
\newblock \doi{10.1007/BF01408578}.

\bibitem[Girault et~al.(2005)Girault, Nochetto, and
  Scott]{GiraultNochettoScott2005}
V.~Girault, R.~H. Nochetto, and L.~R. Scott.
\newblock Maximum-norm stability of the finite element {S}tokes projection.
\newblock \emph{J. Math. Pures Appl.}, 84:\penalty0 279--330, 2005.

\bibitem[Hoffmann et~al.(2001)Hoffmann, Schatz, Wahlbin, and
  Wittum]{HoffmannSchatzWahlbinWittum2001}
W.~Hoffmann, A.~H. Schatz, L.~B. Wahlbin, and G.~Wittum.
\newblock Asymptotically exact a posteriori estimators for the pointwise
  gradient error on each element in irregular meshes. {Part 1}: A smooth
  problem and globally quasi-uniform meshes.
\newblock \emph{Math. Comp.}, 70\penalty0 (235):\penalty0 897--909, 2001.
\newblock \doi{10.1090/S0025-5718-01-01286-8}.

\bibitem[Hu and Ma(2016)]{HuMa2016}
Jun Hu and Rui Ma.
\newblock Superconvergence of both the {C}rouzeix-{R}aviart and {M}orley
  elements.
\newblock \emph{Numer. Math.}, 132\penalty0 (3):\penalty0 491--509, 2016.
\newblock \doi{10.1007/s00211-015-0729-2}.

\bibitem[Hu and Shi(2009)]{HuShi2009}
Jun Hu and Zhongci Shi.
\newblock A new a posteriori error estimate for the {M}orley element.
\newblock \emph{Numer. Math.}, 112\penalty0 (1):\penalty0 25--40, 2009.
\newblock ISSN 0029-599X,0945-3245.
\newblock \doi{10.1007/s00211-008-0205-3}.

\bibitem[Hu et~al.(2021)Hu, Ma, and Ma]{HuMaMa2021}
Jun Hu, Limin Ma, and Rui Ma.
\newblock Optimal superconvergence analysis for the {C}rouzeix-{R}aviart and
  the {M}orley elements.
\newblock \emph{Adv. Comput. Math.}, 47\penalty0 (4):\penalty0 Paper No. 52,
  25, 2021.
\newblock \doi{10.1007/s10444-021-09874-7}.

\bibitem[Krasovskii(1969)]{krasovskii1969properties}
Ju.~P. Krasovskii.
\newblock Properties of {G}reen's functions, and generalized solutions of
  elliptic boundary value problems.
\newblock \emph{Dokl. Akad. Nauk SSSR}, 184:\penalty0 270--273, 1969.
\newblock ISSN 0002-3264.

\bibitem[Leykekhman(2021)]{Leykekhman2021}
Dimitri Leykekhman.
\newblock Pointwise error estimates for ${C}^0$ interior penalty approximation
  of biharmonic problems.
\newblock \emph{Math. Comp.}, 90\penalty0 (327):\penalty0 41--63, 2021.
\newblock \doi{10.1090/mcom/3596}.

\bibitem[Li(2018)]{Li2018SINUM}
Yu-Wen Li.
\newblock Global superconvergence of the lowest-order mixed finite element on
  mildly structured meshes.
\newblock \emph{SIAM J. Numer. Anal.}, 56\penalty0 (2):\penalty0 792--815,
  2018.
\newblock ISSN 0036-1429.
\newblock \doi{10.1137/17M112587X}.

\bibitem[Li et~al.(2007)Li, An, and Li]{LiAnLi2007}
Yuan Li, Rong An, and Kaitai Li.
\newblock Some optimal error estimates of biharmonic problem using conforming
  finite element.
\newblock \emph{Appl. Math. Comput.}, 194:\penalty0 298--308, 2007.
\newblock \doi{10.1016/j.amc.2007.04.040}.

\bibitem[Li(2021)]{Li2021JSCb}
Yuwen Li.
\newblock Recovery-based a posteriori error analysis for plate bending
  problems.
\newblock \emph{J. Sci. Comput.}, 88\penalty0 (3):\penalty0 Paper No. 77, 26,
  2021.
\newblock \doi{10.1007/s10915-021-01595-9}.

\bibitem[Liao and Nochetto(2003)]{LiaoNochetto2003}
X.~Liao and R.~H. Nochetto.
\newblock Local a posteriori error estimates and adaptive control of pollution
  effects.
\newblock \emph{Numer. Methods Partial Differential Equations}, 19\penalty0
  (4):\penalty0 421--442, 2003.
\newblock \doi{10.1002/num.10053}.

\bibitem[Nochetto(1995)]{Nochetto95}
Ricardo~H. Nochetto.
\newblock Pointwise a posteriori error estimates for elliptic problems on
  highly graded meshes.
\newblock \emph{Math. Comp.}, 64\penalty0 (209):\penalty0 1--22, 1995.
\newblock ISSN 0025-5718,1088-6842.
\newblock \doi{10.2307/2153320}.

\bibitem[Rannacher(1979)]{Rannacher1979}
Rolf Rannacher.
\newblock On nonconforming and mixed finite element methods for plate bending
  problems. {The linear case}.
\newblock \emph{RAIRO Mod\'{e}l. Math. Anal. Num\'{e}r.}, 13\penalty0
  (4):\penalty0 369--387, 1979.

\bibitem[Schatz and Wahlbin(1978)]{SchatzWahlbin1978}
Alfred~H. Schatz and Lars~B. Wahlbin.
\newblock Maximum norm estimates in the finite element method on plane
  polygonal domains. {Part 1}.
\newblock \emph{Math. Comp.}, 32\penalty0 (141):\penalty0 73--109, 1978.
\newblock \doi{10.1090/S0025-5718-1978-0502065-1}.

\bibitem[Wang(1993)]{Wang1993}
Ming Wang.
\newblock {$L^\infty$} error estimates of nonconforming finite elements for the
  biharmonic equation.
\newblock \emph{J. Comput. Math.}, 11\penalty0 (3):\penalty0 276--288, 1993.
\newblock ISSN 0254-9409,1991-7139.

\bibitem[Xu and Zhou(2000)]{XuZhou2004}
Jinchao Xu and Aihui Zhou.
\newblock Local and parallel finite element algorithms based on two-grid
  discretizations.
\newblock \emph{Math. Comp.}, 69\penalty0 (231):\penalty0 881--909, 2000.
\newblock \doi{10.1090/S0025-5718-99-01149-7}.

\end{thebibliography}
\end{document}